\newtheorem{theorem}{Theorem}[section]
\newtheorem{lemma}[theorem]{Lemma}
\newtheorem{proposition}[theorem]{Proposition}
\newtheorem{corollary}[theorem]{Corollary}
\newtheorem*{claim}{Claim}
\theoremstyle{definition}
\newtheorem{definition}[theorem]{Definition}
\theoremstyle{remark}
\newtheorem*{remark}{Remark}
\newtheorem*{note}{Note}
\newcommand{\pres}[2]{\langle \, #1 \mid #2 \, \rangle}
\DeclareMathOperator{\COF}{COF}
\title{Describing groups}
\author{Meng-Che ``Turbo" Ho}
\date{\today}
\begin{document}

\begin{abstract}
We study two complexity notions of groups -- the syntactic complexity of a computable Scott sentence and the $m$-degree of the index set of a group. Finding the exact complexity of one of them usually involves finding the complexity of the other, but this is not always the case. Knight et al.~\cite{Ca06}, \cite{Ca12}, \cite{Kn} determined the complexity of index sets of various structures.

In this paper, we focus on finding the complexity of computable Scott sentences and index sets of various groups. We give computable Scott sentences for various different groups, including nilpotent groups, polycyclic groups, certain solvable groups, and certain subgroups of $\mathbb{Q}$. In some of these cases, we also show that the sentence we give are optimal. In the last section, we also show that d-$\Sigma_2\subsetneq\Delta_3$ in the complexity hierarchy of pseudo-Scott sentences, contrasting the result saying d-$\boldsymbol{\Sigma}_2=\boldsymbol{\Delta}_3$ in the complexity hierarchy of Scott sentences, which is related to the boldface Borel hierarchy.
\end{abstract}

\maketitle

\section{Introduction}
One important aspect of computable structure theory is the study of the computability-theoretic complexity of structures. Historically, there are many natural questions of this flavor even outside the realm of logic. For example, the word problem for groups asks: for a given finitely-generated group, is there an algorithm that can determine if two words are the same in the group? It was shown in \cite{Ra60} that there is such an algorithm if and only if the group is computable in the sense of computability theory.

In this work, we will study the computability-theoretic complexity of groups. Among many different notions of complexities of a structure, we look at the quantifier complexity of a computable Scott sentence and the complexity of the index set.

\subsection{Background in recursive structure theory}
Instead of just using the first-order language, we will work in $\mathcal{L}_{\omega_1,\omega}$. This is the language where we allow countable disjunctions and countable conjunctions in addition to the usual first-order language. An classic theorem of Scott shows that this gives all the expressive power one needs for countable structures.

\begin{theorem}[Scott, \cite{Sc65}]
Let $L$ be a countable language, and $\mathcal{A}$ be a countable structure in $L$. Then there is a sentence in $\mathcal{L}_{\omega_1,\omega}$ whose countable models are exactly the isomorphic copies of $\mathcal{A}$. Such a sentence is called a \emph{Scott sentence} for $\mathcal{A}$.
\end{theorem}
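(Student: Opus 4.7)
The plan is to follow Scott's original construction via transfinite ranks of back-and-forth equivalence. For each countable ordinal $\alpha$ and each finite tuple $\bar{a}$ from $\mathcal{A}$, I would define by recursion on $\alpha$ a formula $\varphi^\alpha_{\bar{a}}(\bar{x})$ in $\mathcal{L}_{\omega_1,\omega}$, taking $\varphi^0_{\bar{a}}$ to be the conjunction of all atomic and negated atomic formulas in $L$ satisfied by $\bar{a}$ (countable, since $L$ is countable), setting
\[
\varphi^{\alpha+1}_{\bar{a}}(\bar{x}) \;=\; \varphi^\alpha_{\bar{a}}(\bar{x}) \wedge \bigwedge_{b \in A}\exists y\,\varphi^\alpha_{\bar{a}b}(\bar{x},y) \wedge \forall y \bigvee_{b \in A}\varphi^\alpha_{\bar{a}b}(\bar{x},y),
\]
and taking conjunctions at limits. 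Since both $A$ and $L$ are countable, every $\varphi^\alpha_{\bar{a}}$ lies in $\mathcal{L}_{\omega_1,\omega}$.

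The first key step is to show that the partition of $A^{<\omega}$ induced by $\varphi^\alpha_{\bar a}$-equivalence stabilizes at some countable ordinal $\alpha_0$. Each successor stage only refines the previous partition, and all refinements occur inside the countable set $A^{<\omega}$, so by a cardinality argument there is some $\alpha_0<\omega_1$ at which $\varphi^{\alpha_0}_{\bar a}$ and $\varphi^{\alpha_0+1}_{\bar a}$ define the same subset of $A^{|\bar x|}$ for every $\bar a$.

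The Scott sentence is then
\[
\sigma \;=\; \varphi^{\alpha_0}_{\emptyset} \;\wedge\; \bigwedge_{\bar{a}\in A^{<\omega}}\forall \bar{x}\bigl(\varphi^{\alpha_0}_{\bar{a}}(\bar{x}) \to \varphi^{\alpha_0+1}_{\bar{a}}(\bar{x})\bigr).
\]
Clearly $\mathcal{A}\models\sigma$. For the converse, given a countable $\mathcal{B}\models\sigma$, I would run a standard back-and-forth: start with the empty tuples, which match by $\varphi^{\alpha_0}_{\emptyset}$, and at each step use the ``$\exists y$'' clause of $\varphi^{\alpha_0+1}_{\bar a}$ (enforced in $\mathcal{B}$ by $\sigma$) to extend a matched pair $\bar a \mapsto \bar b$ by an arbitrary next element of $A$, and the ``$\forall y \bigvee$'' clause to extend by an arbitrary next element of $B$. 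Enumerating $A$ and $B$ and alternating sides yields an isomorphism $\mathcal{A}\cong\mathcal{B}$.

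The main obstacle is the stabilization step and verifying that the single clause $\forall\bar x(\varphi^{\alpha_0}_{\bar a}\to\varphi^{\alpha_0+1}_{\bar a})$ is strong enough to drive the back-and-forth; in particular, one must check that this implies the higher $\varphi^{\alpha_0+k}$ hold in $\mathcal{B}$ as well, so that the inductive matching of tuples never stalls. The remaining work is bookkeeping to confirm that the formulas really are in $\mathcal{L}_{\omega_1,\omega}$ and that the definitions pass through successor and limit ordinals correctly.
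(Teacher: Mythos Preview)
The paper does not give a proof of this theorem at all: it is quoted as a classical result due to Scott with a citation to \cite{Sc65}, and the paper immediately moves on to discuss computable Scott sentences. So there is nothing to compare your proposal against in the paper itself.

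That said, your outline is the standard Scott analysis and is essentially correct. The definitions of the $\varphi^\alpha_{\bar a}$, the stabilization argument (there are only countably many finite tuples, so the decreasing chain of partitions must stabilize below $\omega_1$), and the form of the Scott sentence are all right. One small point worth tightening: in the back-and-forth you need that whenever $\mathcal{B}\models\varphi^{\alpha_0}_{\bar a}(\bar b)$ then also $\mathcal{B}\models\varphi^{\alpha_0+1}_{\bar a}(\bar b)$, which is exactly what the second conjunct of $\sigma$ gives you; from this the inductive step goes through directly without needing to climb to $\varphi^{\alpha_0+k}$, since at each step you only use the $\alpha_0\to\alpha_0+1$ implication on the extended tuple. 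Otherwise the sketch is fine and would be accepted as a proof of the cited theorem.
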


To work in a computability setting, this is not good enough, because we also want the sentence to be computable in the following way:

\begin{definition}
We say a set is \emph{computably enumerable} (\emph{c.e.}, or \emph{recursively enumerable}, \emph{r.e.}) if there is an algorithm that enumerates the elements of the set.

We say a sentence (or formula) in $\mathcal{L}_{\omega_1,\omega}$ is \emph{computable} if all the infinite conjunctions and disjunctions in it are over c.e.~sets. Similarly, we define a \emph{computable Scott sentence} to be a Scott sentence which is computable.
\end{definition}

All the $\mathcal{L}_{\omega_1,\omega}$ sentences and formulas we mention in this chapter will be computable, so we will say Scott sentence instead of computable Scott sentence.

We say a structure is \emph{computable} if its atomic diagram is computable. We also identify a structure with its atomic diagram. However, the effective Scott theorem is not true, that is, not all computable structures have a computable Scott sentence.

We say an $\mathcal{L}_{\omega_1,\omega}$ formula is $\Sigma_0$ or $\Pi_0$ if it is finitary (i.e.~no infinite disjunction or conjunction) and quantifier free. For $\alpha >0$, a $\Sigma_\alpha$ formula is a countable disjunction of formulas of the form $\exists x \phi$ where $\phi$ is $\Pi_\beta$ for some $\beta < \alpha$. Similarly, a $\Pi_\alpha$ formula is a countable conjunction of formulas of the form $\forall x \phi$ where $\phi$ is $\Sigma_\beta$ for some $\beta < \alpha$. We say a formula is d-$\Sigma_\alpha$ if it is a conjunction of a $\Sigma_\alpha$ formula and a $\Pi_\alpha$ formula. The complexity of Scott sentences of groups will be one of the main topics throughout this chapter.

Another complexity notion we will study is the following:

\begin{definition}
For a structure $\mathcal{A}$, the \emph{index set} $I(\mathcal{A})$ is the set of all indices $e$ such that $\phi_e$ gives the atomic diagram of a structure $\mathcal{B}$ with $\mathcal{B}\cong\mathcal{A}$.
\end{definition}

There is a connection between the two complexity notions that we study:

\begin{proposition}
For a complexity class $\Gamma$, if we have a computable $\Gamma$ Scott sentence for a structure $\mathcal{A}$, then the index set $I(\mathcal{A})$ in $\Gamma$.
\end{proposition}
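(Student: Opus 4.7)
The plan is to observe that, since $\phi$ is a Scott sentence for $\mathcal{A}$, an index $e$ lies in $I(\mathcal{A})$ exactly when $\phi_e$ encodes the atomic diagram of a structure $\mathcal{B}_e$ satisfying $\phi$. So it suffices to prove the uniform statement: if $\psi$ is a computable $\Gamma$ sentence (more generally, formula), then $\{(e,\bar{a}) : \mathcal{B}_e \models \psi(\bar{a})\}$ is in $\Gamma$. One has to be slightly careful about $e$'s for which $\phi_e$ fails to be the diagram of a structure at all; the cleanest way is to note that one can computably and uniformly convert any index $e$ into an index $e'$ of a total consistent atomic diagram (sending malformed diagrams to some fixed trivial one), and rephrase the index set accordingly.

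I would then proceed by effective transfinite induction on the complexity of the formula $\psi$. For the base case, $\Sigma_0$ and $\Pi_0$ formulas are quantifier free and finitary, so evaluating $\psi(\bar{a})$ in $\mathcal{B}_e$ is computable uniformly in $e$ and $\bar{a}$ by querying the atomic diagram $\phi_e$. For the inductive step, suppose $\psi$ is $\Sigma_\alpha$, so $\psi = \bigvee_{i \in W} \exists \bar{x}\, \theta_i(\bar{x})$ with $W$ c.e.\ and each $\theta_i$ a $\Pi_{\beta_i}$ formula, $\beta_i < \alpha$. Then $\mathcal{B}_e \models \psi$ iff there exist $i \in W$ and a tuple $\bar{b}$ from $\mathcal{B}_e$ (equivalently, from $\omega$) with $\mathcal{B}_e \models \theta_i(\bar{b})$; by the inductive hypothesis each inner condition is $\Pi_{\beta_i}$ uniformly, and the c.e.\ disjunction over $i \in W$ and $\bar{b} \in \omega^{<\omega}$ assembles these into a $\Sigma_\alpha$ predicate in $e$. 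The $\Pi_\alpha$ case is dual, and the d-$\Sigma_\alpha$ case follows by taking a conjunction of the two.

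Applying this to $\psi = \phi$ gives that $\{e : \mathcal{B}_e \models \phi\}$ is in $\Gamma$, which is exactly $I(\mathcal{A})$ (up to the convention about malformed indices handled above). The only genuine subtlety is the bookkeeping in the inductive step, namely confirming that the c.e.\ index set $W$ indexing the disjuncts of a computable formula can be threaded through uniformly so that the resulting predicate is actually presented as a $\Sigma_\alpha$ set rather than merely lying in $\Sigma_\alpha$ pointwise; this is routine because computability of $\psi$ gives uniform access to $W$ and to indices for the subformulas $\theta_i$, so the induction can be carried out effectively in $\alpha$.
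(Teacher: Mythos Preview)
The paper states this proposition without proof, treating it as a well-known fact from computable structure theory (see, e.g., \cite{As00}). Your argument is correct and is precisely the standard folklore proof: effective transfinite induction on the complexity of the computable infinitary formula, showing that the satisfaction relation $\{(e,\bar a) : \mathcal{B}_e \models \psi(\bar a)\}$ has the same complexity as $\psi$, uniformly. Your handling of malformed indices via a computable ``repair'' $e \mapsto e'$ is a clean way to deal with that technicality; the only caveat is that the fixed trivial structure you default to must not itself satisfy $\phi$, which is harmless since one can always choose it to lie outside the isomorphism class of $\mathcal{A}$.
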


This proposition and many examples lead to the following thesis:

\begin{quote}
For a given computable structure $\mathcal{A}$, to calculate the precise complexity of $I(\mathcal{A})$, we need a good description of $\mathcal{A}$, and once we have an ``optimal'' description, the complexity of $I(\mathcal{A})$ will match that of the description.
\end{quote}

In this chapter, we focus on the case where the above-mentioned structures are groups. The thesis is shown to be false in \cite{Kn14}, where they found a subgroup of $\mathbb{Q}$ with index set being d-$\Sigma_2$ which cannot have a computable d-$\Sigma_2$ Scott sentence. However,  in the case of finitely-generated groups, the thesis is still open, and the groups we considered give further evidence for the thesis in this case. For more background in computable structure theory, we refer the reader to \cite{As00}.

\subsection{Groups}
We fix the signature of groups to be $\{\cdot, ^{-1}, 1\}$. Throughout the chapter, we will often identify elements (words) in the free group $F_k=F(x_1,\ldots,x_k)$ with functions from $G^k\to G$, by substituting $x_i$ by the corresponding elements from $G$, and do the group multiplication in $G$. Here we restate the relation between word problem and computability of the group:

\begin{theorem}[\cite{Ra60}]
A finitely-generated group is computable if and only if it has solvable word problem.
\end{theorem}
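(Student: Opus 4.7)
The plan is to prove both implications in a straightforward way, handling the forward direction essentially by inspection and building a computable copy of $G$ out of the word-problem algorithm in the reverse direction.

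For the ($\Rightarrow$) direction, suppose $G$ is computable via some generating set $\{g_1,\ldots,g_k\}$. Identifying $G$ with its atomic diagram on domain $\mathbb{N}$, the operations $\cdot$ and $^{-1}$ are computable functions and equality with $1$ is a decidable predicate. Given a word $w \in F_k$, I would first locate the indices $n_1,\ldots,n_k$ of the generators inside $\mathbb{N}$ (a finite, one-time computation), then compute the index of the element $w(g_1,\ldots,g_k) \in G$ by iterating the computable multiplication and inversion along the syllables of $w$, and finally query whether that index equals the index of $1$. This decides the word problem.

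For the ($\Leftarrow$) direction, suppose the word problem of $G = \langle g_1,\ldots,g_k\rangle$ is solvable, so equality of two words $u,v \in F_k$ in $G$ is decidable. I would construct a computable isomorphic copy $\mathcal{B}$ of $G$ on $\mathbb{N}$ as follows. Enumerate $F_k$ in a fixed computable order $w_0, w_1, w_2, \ldots$. Build a list of ``chosen representatives'' stage by stage: at stage $s$, if $w_s$ is not equal in $G$ to any previously chosen representative (decidable by assumption), add $w_s$ to the list and assign it the next unused natural number as its index; otherwise skip it. This produces a computable bijection between $\mathbb{N}$ and $G$. The graphs of multiplication and inversion in $\mathcal{B}$ are then computable: to compute $m \cdot_{\mathcal{B}} n$, look up the representative words $u_m, u_n$, form the concatenation $u_m u_n \in F_k$, and search through the enumerated representatives for the (unique) one equal to $u_m u_n$ in $G$, using the word problem solver to test equality; the identity is the index of the empty word.

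The only subtlety — and the closest thing to an obstacle — is ensuring the search in the multiplication procedure always terminates: this uses the fact that every element of $G$ is represented by \emph{some} word in $F_k$, so the enumeration of representatives will eventually produce the correct one, and the word-problem solver will identify it. Once this is noted, $\mathcal{B}$ is a computable group isomorphic to $G$, completing the equivalence.
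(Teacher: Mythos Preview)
Your proof is correct and is essentially the standard argument for this classical result. However, the paper does not actually give a proof of this theorem: it is stated with attribution to \cite{Ra60} and used as background, with no proof supplied in the paper itself. So there is nothing in the paper to compare your argument against; your write-up simply fills in what the paper takes for granted.
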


In this chapter, all the groups we consider will be computable.

\subsection{History}

Scott sentences and index sets for many classes of groups have been studied, for example, reduced abelian $p$-groups \cite{Ca06}, free groups \cite{Ca12}, finitely-generated abelian groups, the infinite dihedral group $D_\infty$, and torsion-free abelian groups of rank 1 \cite{Kn}. We will not list all the results, but will mention many of them as needed.

\subsection{Overview of results}

For the reader's convenience, we summarize the main results of each section:
\begin{itemize}
\item (Section 2) Every polycyclic group (including the nilpotent groups) has a computable d-$\Sigma_2$ Scott sentence, and the index set of a finitely-generated non-co-Hopfian nilpotent group is $m$-complete d-$\Sigma_2$.
\item (Section 3) Certain finitely-generated solvable groups, including $(\mathbb{Z}/d\mathbb{Z}) \wr \mathbb{Z}$, $\mathbb{Z} \wr \mathbb{Z}$, and the solvable Baumslag--Solitar groups $BS(1,n)$, have computable d-$\Sigma_2$ Scott sentences and their index sets are $m$-complete d-$\Sigma_2$.
\item (Section 4) The infinitely-generated free nilpotent group has a computable $\Pi_3$ Scott sentence and its index set is $m$-complete $\Pi_3$.
\item (Section 5) We give an example of a subgroup of $\mathbb{Q}$ whose index set is $m$-complete $\Sigma_3$, achieving an upper bound of such groups given in \cite{Kn}.
\item (Section 6) We give another example of a subgroup of $\mathbb{Q}$ which has both computable $\Sigma_3$ and computable $\Pi_3$ pseudo-Scott sentences, but has no computable d-$\Sigma_2$ pseudo-Scott sentence, contrasting a result in the Borel hierarchy of $\operatorname{Mod}(\mathcal{L})$. 
\end{itemize}

\section{Finitely-generated nilpotent and polycyclic groups}

In this section, we will focus on finitely-generated groups, especially nilpotent and polycyclic groups. A priori, even if a structure is computable, it might not have a computable Scott sentence. However, the following theorem says that a computable finitely-generated group always has a computable Scott sentence.

\begin{theorem}[Knight, Saraph, \cite{Kn}]
Every computable finitely-generated group has a computable $\Sigma_3$ Scott sentence.
\end{theorem}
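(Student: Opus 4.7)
The plan is to produce an explicit $\Sigma_3$ Scott sentence by naming a generating tuple and pinning down both the positive atomic diagram and the negative atomic diagram of the free presentation of $G$. Fix generators $g_1,\ldots,g_n$ of $G$, and let
\[
R^+ = \{w \in F_n : w(\bar g) = 1 \text{ in } G\}, \qquad R^- = \{w \in F_n : w(\bar g) \neq 1 \text{ in } G\}.
\]
Because $G$ is computable, its word problem is solvable, so $R^+$ and $R^-$ are both c.e. The proposed Scott sentence is
\[
\exists x_1 \cdots \exists x_n \left[ \bigwedge_{w \in R^+} w(\bar x) = 1 \; \wedge \; \bigwedge_{w \in R^-} w(\bar x) \neq 1 \; \wedge \; \forall y \bigvee_{w \in F_n} y = w(\bar x) \right].
\]

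Next I would analyze the complexity. The first conjunct is a c.e.\ conjunction of atomic formulas, hence $\Pi_1$ (in fact it is already $\Pi_1$ as a conjunction of $\Sigma_0$ formulas); the second is a c.e.\ conjunction of negated atomics, likewise $\Pi_1$; and the third is $\forall y$ over a c.e.\ disjunction of atomics, hence $\Pi_2$. The conjunction of these three is $\Pi_2$, so prefixing with the existential quantifiers over $\bar x$ yields a computable $\Sigma_3$ sentence.

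Then I would verify the Scott-sentence property. Clearly $G \models \varphi$ via its chosen generators. Conversely, suppose $H \models \varphi$ witnessed by $h_1, \ldots, h_n$. The second and third conjuncts force $H$ to be generated by $\bar h$; the first conjunct ensures the assignment $g_i \mapsto h_i$ respects all relations of $G$, so it extends to a surjective homomorphism $\pi\colon G \to H$; and the second conjunct guarantees that if $w(\bar g)\neq 1$ in $G$ then $w(\bar h)\neq 1$ in $H$, so $\pi$ is injective. Thus $G \cong H$.

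The only subtle point is the appeal to the solvability of the word problem: without it, $R^+$ and $R^-$ need not both be c.e., and the sentence would not be computable. Since \cite{Ra60} gives computability equivalent to decidability of the word problem, this is exactly where the hypothesis is used, and no further obstacle arises.
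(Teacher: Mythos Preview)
Your proposal is correct and is precisely the standard argument; the paper does not give its own proof of this theorem (it is cited from \cite{Kn}), but your sentence is exactly $\exists\bar x\,[\,\langle\bar x\rangle\cong G\;\wedge\;\forall y\,\bigvee_w w(\bar x)=y\,]$ in the paper's notation, which is the intended construction. One trivial slip: it is the third conjunct alone, not the second and third, that forces $\bar h$ to generate $H$.
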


If we think of nilpotent and polycyclic groups as classes of ``tame'' groups, then the abelian groups are the ``tamest'' groups. Using the fundamental theorem of finitely-generated abelian groups, which says that every finitely-generated abelian group is a direct sum of cyclic groups, one can obtain the following theorem saying that the previous computable Scott sentences are not optimal in the case of abelian groups:

\begin{theorem}[Knight, Saraph, \cite{Kn}]
Let $G$ be an infinite finitely-generated abelian group. Then $G$ has a computable d-$\Sigma_2$ Scott sentence. Furthermore, $I(G)$ is $m$-complete d-$\Sigma_2$.
\end{theorem}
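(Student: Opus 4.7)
The plan is to invoke the fundamental theorem of finitely-generated abelian groups to write $G \cong \mathbb{Z}^n \oplus \bigoplus_{i=1}^m \mathbb{Z}/d_i\mathbb{Z}$ with $n \geq 1$, and then split the Scott sentence along the existential/universal divide inherent in the classification. The $\Sigma_2$ conjunct $\sigma$ will existentially assert the existence of a basis: tuples $\bar x = (x_1,\ldots,x_n)$ and $\bar t = (t_1,\ldots,t_m)$ that pairwise commute, satisfy the torsion relations $t_i^{d_i}=1$ together with the non-triviality facts $t_i^k \neq 1$ for $0 < k < d_i$, have no non-trivial linear relation among them (a countable conjunction of negated atomic formulas, hence $\Pi_1$), and such that each $x_i$ is $p$-indivisible for every prime $p$ (again a $\Pi_1$ countable conjunction $\bigwedge_p \forall y\, y^p \neq x_i$). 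Wrapping this $\Pi_1$ matrix in $\exists \bar x\, \exists \bar t$ yields a $\Sigma_2$ sentence.

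The $\Pi_2$ conjunct $\pi$ then caps the group from above: it asserts that $G$ is abelian ($\Pi_1$), that every element's order divides $\operatorname{exp}(T)$ or is infinite ($\Pi_1$), and that the torsion-free rank is at most $n$, expressed as $\forall y_0 \ldots y_n\, \bigvee_{(a_i)\ne 0}\bigvee_{k}\bigl(\prod y_i^{a_i}\bigr)^k = 1$, which is $\Pi_2$. To verify $\sigma \wedge \pi$ is a Scott sentence, note that any model contains a subgroup isomorphic to $G$ (from $\sigma$) and can have no extra free rank (from the rank bound in $\pi$) nor extra torsion inconsistent with $T$ (the $p$-indivisibility of the $x_i$ plus the torsion bound rule out finer subgroups of $\mathbb{Q}$ sitting between $\mathbb{Z}$ and its localisations, exactly as $p$-indivisibility forces $\mathbb{Z}$ rather than $\mathbb{Z}[1/p]$ in the rank-one case). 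Combined, the sentence is d-$\Sigma_2$.

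The upper bound on $I(G)$ is then immediate from the proposition relating Scott sentences to index sets. For the $m$-completeness part, I would exhibit a reduction from a fixed d-$\Sigma_2$-complete set, such as $S = \{(a,b) : W_a \text{ finite}\wedge W_b \text{ infinite}\}$. Given $(a,b)$, I would uniformly enumerate a computable copy $G_{a,b}$ whose underlying presentation begins as $G$ but reacts to the enumerations of $W_a, W_b$: each new element of $W_a$ triggers the addition of a new torsion summand (so that finiteness of $W_a$ is needed to keep the torsion part isomorphic to $T$), while the enumeration of $W_b$ is used to build a rank-$n$ free quotient only if $W_b$ is infinite (for example by enabling a sequence of ``divisions'' that collapse an auxiliary free factor exactly when $W_b$ grows without bound). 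The construction is arranged so that $G_{a,b} \cong G$ iff $(a,b) \in S$.

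The main obstacle will be this hardness construction: one must simultaneously control the torsion subgroup and the free rank through a single computable enumeration of group elements and multiplication table, while keeping the approximation finite-to-one so that the atomic diagram is computable regardless of whether $(a,b)\in S$. The Scott-sentence half is essentially a structural exercise in quantifier counting on the fundamental theorem, but ensuring the $m$-reduction actually lands inside computable presentations of abelian groups (rather than merely abelian monoids or partial structures) is where the care is needed.
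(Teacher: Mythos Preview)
Your proposed Scott sentence has two genuine gaps.

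First, requiring each $x_i$ to be $p$-indivisible for every prime $p$ is not enough in rank $n\geq 2$ to force the ambient group to be $\mathbb{Z}^n$ modulo torsion. Take $G=\mathbb{Z}^2$ and $H=\mathbb{Z}\oplus\mathbb{Z}[1/2]$. In $H$ the elements $x_1=(1,0)$ and $x_2=(1,1)$ are independent and each is $p$-indivisible for every prime (the first coordinate is $1$), so $H\models\sigma$; and $H$ is abelian, torsion-free, of rank~$2$, so $H\models\pi$. Yet $H\not\cong G$. The rank-one intuition you invoke (``$p$-indivisibility forces $\mathbb{Z}$ rather than $\mathbb{Z}[1/p]$'') simply does not transfer coordinatewise. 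What is actually needed is a condition on the whole tuple $\bar x$ modulo torsion, and this is what the paper does: it requires that for every integer matrix $M$ with $\det M\neq\pm 1$ there is no $\bar z$ with $M\bar z=\bar x$ modulo $T$. This Smith-normal-form/determinant condition is precisely what pins down a \emph{basis} of the free part, not merely an independent set of primitive vectors.

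Second, your $\Pi_2$ conjunct bounds only the exponent of the torsion, not its cardinality. If $G=\mathbb{Z}\oplus\mathbb{Z}/2\mathbb{Z}$, then $H=\mathbb{Z}\oplus(\mathbb{Z}/2\mathbb{Z})^2$ satisfies both your $\sigma$ and your $\pi$. You would at least need to add ``there are at most $|T|$ torsion elements,'' which is $\Pi_1$ and easy to patch in, but the first gap above remains the serious one.

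The paper's route is structurally different: rather than a ``lower bound $\sigma$ / upper bound $\pi$'' split, it isolates a single $\Pi_1$ formula $\phi(\bar x,\bar y)$ whose realizations in $G$ are exactly generating tuples (the torsion part $\bar y$ satisfies the atomic diagram of $T$, and $\bar x$ satisfies the determinant condition modulo $T$), and then applies the Generating Set Lemma: the $\Sigma_2$ conjunct is $\exists\bar x\,[\phi(\bar x)\wedge\langle\bar x\rangle\cong G]$, and the $\Pi_2$ conjunct says every tuple satisfying $\phi$ generates the whole group. This packaging is what makes the abelian case the base of the induction for polycyclic groups.

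Your hardness sketch also does not work as stated. Adding a new torsion summand for each element enumerated into $W_a$ means that even a finite nonempty $W_a$ already changes the isomorphism type of the torsion part; finiteness of $W_a$ does not give $G_{a,b}\cong G$. The paper's argument instead swings between three fixed targets, $\hat G$ (a direct limit under a non-surjective injective endomorphism), $G$, and $G\times\mathbb{Z}$, using the $\Sigma_2$ approximation to decide at each stage which target to build toward; the transitions are reversible at finite stages (apply the endomorphism; create or absorb an extra central $\mathbb{Z}$), so the limiting group depends only on which approximations stabilize.
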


This theorem, together with some other results in \cite{Kn}, leads to the question: Does every finitely-generated group have a computable d-$\Sigma_2$ Scott sentence? Generalizing the previous theorem, we show that this is true for polycyclic groups, and also prove completeness for certain classes of groups. We start by giving the definition for several group-theoretic notions that are used in the discussion.

\begin{definition}
For two subgroups $N,M$ of $G$, we write $[N,M]$ to be the subgroup of $G$ generated by all commutators $[n,m]$ with $n\in N$ and $m\in M$.

For a group $G$, we inductively define $G_1 =G$ and $G_{k+1} = [G_k,G]$. We call $G_k$ the $k$-th term in the \emph{lower central series}. A group is called \emph{nilpotent} if $G_{k+1}$ is the trivial group for some $k$, and the smallest such $k$ is called the \emph{nilpotency class} of the group. Note that $G$ is abelian if its nilpotency class equals 1.

We also inductively define $Z_0(G) = 1$ and $Z_{k+1}(G) = \{ x\in G \mid \forall y\in G, [x,y]\in Z_i(G)\}$. We call $Z_k(G)$ the $k$-th term in the \emph{upper central series}. It is well-known that a group $G$ is nilpotent if and only $Z_k(G)=G$ for some $k$. In this case, the smallest such $k$ is equal to to the nilpotency class of the group.

We define the \emph{free nilpotent group} of rank $m$ and class $p$ by $N_{p,m} = F(m) / F(m)_{p+1}$, where $F(m)$ is the free group of $m$ generators.
\end{definition}

\begin{definition}
For a group $G$, we inductively define $G^{(0)} =G$ and $G^{(k+1)} = [G^{(k)},G^{(k)}]$. We call $G^{(k)}$ the $k$-th term in the \emph{derived series}. In the case when $k=1$, this is the \emph{derived subgroup} $G'=G^{(1)}$ of $G$. A group is called \emph{solvable} if $G^{(k)}=1$ for some $k$, and the smallest such $k$ is called the \emph{derived length} of the group. Note that $G$ is abelian if its derived length equals 1.
\end{definition}

\begin{definition}
A polycyclic group is a solvable group in which every subgroup is finitely-generated.
\end{definition}

By definition, every polycyclic group is solvable. It is well known that every finitely-generated nilpotent group is polycyclic. It is also known that all polycylic groups have solvable word problem, and thus are computable. By contrast, an example of a finitely-presented solvable but non-computable group is given in \cite{Ha81}.

We start by giving a sentence saying a tuple generates a subgroup isomorphic to a given finitely-generated computable group $G$. We first fix a presentation $\pres{\overline{a}}{R}$ of $G$, where $R$ is normally closed. The solvability of the word problem of $G$ then says $R$ is computable. Throughout this chapter, we will write $\langle \overline{x} \rangle \cong G$ to be shorthand for $$\bigwedge\limits_{w(\overline{a})\in R} w(\overline{x})=1 \wedge \bigwedge\limits_{w(\overline{a})\notin R} w(\overline{x})\neq1.$$

Since $R$ is computable, the sentence is also computable, and we see it is $\Pi_1$. And since $\overline{x}$ satisfies all the relations of $\overline{a}$ and nothing more, this sentence implies $\langle \overline{x} \rangle \cong \langle \overline{a} \rangle = G$. However, this actually says more -- this sentence requires that these two groups are generated in the same way. Thus, for instance, if $a_1$ is a central element in $G$, then so is $x_1$. This will be a useful observation later. This also implies that the choice of presentation is relevant. In most of our discussion, the choice will be implicit, which is usually the standard presentation (i.e.~the one given in the definition.)

The following is a very useful lemma for finding a computable Scott sentence for finitely-generated groups. We will use this lemma for both polycyclic and solvable groups.

\begin{lemma}[Generating set lemma]\label{generating set lemma}
In a computable group $G$, if there is a non-empty computable $\Sigma_2$ formula $\phi(\overline{x})$ such that every $\overline{x}\in G$ satisfying $\phi$ is a generating tuple of the group, then $G$ has a computable d-$\Sigma_2$ Scott sentence.
\end{lemma}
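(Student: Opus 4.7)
The plan is to augment the given $\phi$ slightly and then form the Scott sentence as a conjunction of one $\Sigma_2$ witness clause and one $\Pi_2$ uniqueness clause. First, I would fix any tuple $\overline{a}\in G$ with $\phi(\overline{a})$; by hypothesis $\overline{a}$ is a generating tuple of $G$, so it yields a finite presentation $\pres{\overline{a}}{R}$, and solvability of the word problem in $G$ makes $R$ computable. Use $\overline{a}$ to define the shorthand $\langle \overline{x}\rangle \cong G$, and let $\phi'(\overline{x}) := \phi(\overline{x}) \wedge (\langle \overline{x}\rangle \cong G)$. Since $\Pi_1 \subseteq \Sigma_2$ and $\Sigma_2$ is closed under finite conjunctions, $\phi'$ is still a computable $\Sigma_2$ formula, it is non-empty in $G$ (witnessed by $\overline{a}$), and every tuple satisfying $\phi'$ is a generating tuple of $G$ that also satisfies exactly the word relations of $\overline{a}$.

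Next I would take as Scott sentence
\[
\psi \;:=\; \exists \overline{x}\,\phi'(\overline{x}) \;\wedge\; \forall \overline{x}\Bigl(\phi'(\overline{x}) \to \forall y \bigvee_{w\in F_k} y = w(\overline{x})\Bigr).
\]
The first conjunct is $\Sigma_2$. For the second, $\neg\phi'$ is $\Pi_2$ and $\forall y \bigvee_w y = w(\overline{x})$ is $\Pi_2$, so the implication (their disjunction) is $\Pi_2$, and the outer $\forall \overline{x}$ keeps it $\Pi_2$. Hence $\psi$ is d-$\Sigma_2$, and computability is automatic since $\phi$ and $R$ are computable while $F_k$ is c.e.

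To verify Scott-ness: $G\models\psi$ because $\overline{a}$ witnesses the existential, and any tuple satisfying $\phi'$ generates $G$ and so makes the universal part true. Conversely, given a countable $\mathcal{M}\models\psi$, I would pick $\overline{b}\in\mathcal{M}$ satisfying $\phi'(\overline{b})$; then $\overline{b}$ satisfies the same word equalities and inequalities as $\overline{a}$, so $\overline{a}\mapsto \overline{b}$ extends to an isomorphism from $G$ onto the subgroup $\langle \overline{b}\rangle\leq \mathcal{M}$, and the universal clause applied to $\overline{b}$ forces $\langle \overline{b}\rangle = \mathcal{M}$.

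The one subtle point is keeping the second conjunct in $\Pi_2$: this is precisely what forces the hypothesis on $\phi$ to be $\Sigma_2$, since the implication is $\neg\phi' \vee (\ldots)$ with the right disjunct already $\Pi_2$. The small trick is to strengthen $\phi$ to $\phi'$ so that the clause $\langle \overline{x}\rangle\cong G$ comes for free (still $\Sigma_2$); this is what guarantees that an arbitrary witness of $\phi'$ in a foreign model $\mathcal{M}$ actually generates a copy of $G$ rather than merely a finitely-generated group into which $G$ surjects.
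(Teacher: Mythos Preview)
Your proposal is correct and is essentially the paper's own argument: the paper's Scott sentence is the conjunction of $\exists\overline{x}\,[\phi(\overline{x})\wedge\langle\overline{x}\rangle\cong G]$ (your $\exists\overline{x}\,\phi'(\overline{x})$) with $\forall\overline{x}\,[\phi(\overline{x})\to\forall y\bigvee_w w(\overline{x})=y]$, and the verification is identical to yours. The only difference is cosmetic: the paper keeps the bare $\phi$ in the antecedent of the $\Pi_2$ clause rather than your strengthened $\phi'$, but either choice works, and the complexity count is the same. One small terminological slip: $\pres{\overline{a}}{R}$ is a presentation on finitely many generators, not a \emph{finite} presentation --- $R$ may well be infinite (merely computable), which is all you need.
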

\begin{proof}
Consider the Scott sentence which is the conjunction of the following:
\begin{enumerate}
\item $\forall\overline{x}\left[\phi(\overline{x}) \rightarrow \forall y \bigvee\limits_{w} w(\overline{x}) = y\right]$
\item $\exists \overline{x} \left[\phi(\overline{x}) \wedge \langle \overline{x} \rangle \cong G\right]$
\end{enumerate}

In (1), $w$ ranges over all words in $\overline{x}$.

Note that (1) is $\Pi_2$ and (2) is $\Sigma_2$, thus the conjunction is d-$\Sigma_2$. To see this is a Scott sentence, pick a group $H$ satisfying the sentence. Then pick a tuple $\overline{x} \in H$ that satisfies the second conjunct. The first conjunct then says $H$ is generated by $\overline{x}$, thus is isomorphic to $G$.
\end{proof}

We now are ready to state and prove our theorem about polycyclic groups, which generalizes the result in \cite{Kn} about infinite finitely-generated abelian groups.

\begin{theorem}\label{polycyclic scott sentence}
Every polycyclic group $G$ has a computable d-$\Sigma_2$ Scott sentence.
\end{theorem}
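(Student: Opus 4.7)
The plan is to apply the Generating Set Lemma (Lemma~\ref{generating set lemma}): it suffices to exhibit a non-empty computable $\Sigma_2$ formula $\phi(\overline{x})$ such that every $\overline{x} \in G$ satisfying $\phi$ is a generating tuple of $G$. First, fix a polycyclic generating sequence $a_1, \ldots, a_n$ for $G$, yielding the subnormal chain $G = G_0 \triangleright G_1 \triangleright \cdots \triangleright G_n = 1$ with each quotient $G_{i-1}/G_i$ cyclic of order $m_i \in \mathbb{N} \cup \{\infty\}$, and the associated polycyclic presentation $\pres{\overline{a}}{R}$; since polycyclic groups have solvable word problem, $R$ is computable.

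Generalizing the Knight--Saraph formula for finitely-generated abelian groups, I take $\phi(\overline{x})$ to be the conjunction of: (i) $\langle \overline{x} \rangle \cong G$, the $\Pi_1$ statement saying $\overline{x}$ satisfies exactly the relations of $\overline{a}$; and (ii) for each index $i$ with $m_i = \infty$, the $\Pi_1$ formula
\[
\forall y \; \bigwedge_{k \geq 2,\, w} x_i \neq y^k \cdot w(x_{i+1}, \ldots, x_n),
\]
expressing that $x_i$ is ``primitive'' modulo $\langle x_{i+1}, \ldots, x_n \rangle$, where $w$ ranges over group words. The whole formula is $\Pi_1$, hence $\Sigma_2$, and it is non-empty because $\overline{a}$ itself satisfies it.

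What remains is to verify that every $\overline{x}$ satisfying $\phi$ in $G$ actually generates $G$, and I would argue this by induction on $n$. The base case $n=1$ is immediate: if $G$ is finite cyclic, condition (i) forces $\langle x \rangle = G$ by cardinality; if $G \cong \mathbb{Z}$, condition (ii) pins $x$ down to $\pm a$. I expect the inductive step to be where the real work lies. The isomorphism $\pi : G \to \langle \overline{x} \rangle$ determined by (i) sends $G_1$ onto $\langle x_2, \ldots, x_n \rangle$, but this subgroup need not coincide with $G_1$ inside $G$, so the inductive hypothesis cannot be applied naively. The key insight I plan to exploit is that via $\pi$ every element of $\langle \overline{x} \rangle$ has a normal form $x_1^{e_1}\cdots x_n^{e_n}$; the primitive conditions in (ii) at the infinite-cyclic factors, together with the non-relations encoded in (i) at the finite-cyclic factors, should be exactly what is needed to force $\pi$ to be surjective stage by stage along the polycyclic chain, yielding $\langle \overline{x} \rangle = G$.
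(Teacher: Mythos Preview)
Your proposal takes a genuinely different route from the paper and leaves the crucial step unfinished. You yourself flag the obstacle: the image $\langle x_2,\ldots,x_n\rangle$ of $G_1$ under the isomorphism $\pi$ need not lie inside $G_1$, so there is no subgroup of $G$ to which the inductive hypothesis applies. Your proposed fix---that the primitivity conditions ``should be exactly what is needed to force $\pi$ to be surjective stage by stage''---is not an argument but a hope. Concretely, condition~(ii) at level $i$ speaks about the left coset $x_i\langle x_{i+1},\ldots,x_n\rangle$, and this subgroup is in general neither normal in $G$ nor contained in $G_i$, so it is unclear how the condition interacts with the polycyclic filtration of $G$. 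One can check in small examples (e.g.\ $\mathbb{Z}^n$, the Heisenberg group, $\mathbb{Z}^2\rtimes C_m$) that bad tuples do get rejected, but turning this into a proof for arbitrary polycyclic $G$ along your lines would require a non-obvious argument relating primitivity modulo $\langle x_{i+1},\ldots,x_n\rangle$ to the index $[G:\langle\overline{x}\rangle]$; you have not supplied one.

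The paper avoids this difficulty entirely by inducting on the \emph{derived length} rather than the length of a polycyclic series. The key observation is that the derived subgroup $G'$ is uniformly $\Sigma_1$-definable in any group, via
\[
G'(x)\;\equiv\;\exists\overline{s}\ \bigvee_{w\in (F_{|\overline{s}|})'} x=w(\overline{s}).
\]
Because $G'$ is definable, one can relativize the inductively obtained d-$\Sigma_1$ formula $\phi_{G'}$ so that it picks out generating tuples of $G'$ \emph{inside $G$}; one then combines this with a (relativized) abelian-case formula for the quotient $G/G'$. Thus the paper never needs to compare $\langle x_{i+1},\ldots,x_n\rangle$ with $G_i$: the only subgroup it relativizes to is $G'$, which is intrinsically definable, and the quotient is abelian, where the base case (essentially your determinant/primitivity argument for $\mathbb{Z}^n\oplus T$) is straightforward. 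This is the missing idea in your approach.
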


\begin{proof}

We will show the claim that there is a d-$\Sigma_1$ formula $\phi(\overline{x})$ such that every $\overline{x}\in G$ satisfying $\phi$ is a generating set of the group. We induct on the derived length of $G$.

When the derived length is 1, i.e.~$G$ is abelian, the statement of the theorem was proved in \cite{Kn}, but for the inductive hypothesis, we need to find $\phi$ for $G$. For simplicity, we think of $G$ additively in this case. By the fundamental theorem of abelian groups, suppose $G\cong \mathbb{Z}^n\oplus T$, where $T$ is the torsion part of $G$, and $|T|=k$. Let $\chi(\overline{y})$ be the (finitary) sentence saying the $k$-tuple $\overline{y}$ satisfies the atomic diagram of $T$. Then we consider $\phi(\overline{x},\overline{y})$ to be
$$\chi(\overline{y}) \wedge (\bigwedge\limits_{m>1}m x\neq1) \wedge (\bigwedge\limits_{\det(M)\neq \pm1} \forall \overline{z}\bigwedge\limits_{\langle i_j\rangle\in k^n} M\overline{z}\neq\overline{x}+\langle y_{i_j}\rangle).$$
Here, we use two tuples $\overline{x}$ and $\overline{y}$ for clarity, but one can concatenate them into just one tuple $\overline{x}$.

The first conjunct says that $\overline{y}$ is exactly the $k$ torsion elements in the group. The second conjunct says $\overline{x}$ is torsion-free. In the third conjunct, we are thinking $\overline{z}$, $\overline{x}$, and $\langle y_{i_j}\rangle$ as row vectors, and $M$ ranges over all $n\times n$ matrices with entries in $\mathbb{Z}$ and determinant not equal to $\pm1$. Thus $\bigwedge\limits_{\langle i_j\rangle \in k^n}M\overline{z}\neq\overline{x}+\langle y_{i_j}\rangle$ is really saying $M\overline{z}\neq\overline{x}$ modulo $T$. So, working modulo $T$ and again thinking of the $x_i$'s as row vectors in $\mathbb{Z} \cong G/T$, the third conjunct forces $\overline{x}$, as an $n\times n$ matrix, to have determinant $\pm1$. Thus, $\overline{x}$ is a basis of $G$ modulo $T$, and so every $\overline{x},\overline{y}$ satisfying $\phi$ will generate the group $G$. Finally, we see that the sentence is $\Pi_1$, thus proving the induction base.

Now we prove the induction step. Assume the claim is true for all polycyclic groups with derived length less than that of $G$. In particular, the derived subgroup $G'$ has a computable d-$\Sigma_1$ formula $\phi_{G'}$ as described in the claim. In $G$, $G'$ is defined by the computable $\Sigma_1$ formula
$$G'(x) \equiv \exists \overline{s} \bigvee\limits_{w\in (F_{|\overline{s}|})'}x=w(\overline{s}).$$
Thus, we may relativize $\phi$ by replacing $\exists \overline{x}\theta(\overline{x})$ by $\exists \overline{x} (G'(\overline{x}) \wedge \theta(\overline{x}))$, $\forall \overline{x}\theta(\overline{x})$ by $\forall \overline{x} (G'(\overline{x}) \rightarrow \theta(\overline{x}))$, and adding one more conjunct $\bigwedge\limits_i G'(x_i)$. This does not increase the complexity of the sentences. Furthermore, every element of $G$ satisfying the relativized version $\tilde{\phi}_{G'}$ of $\phi_{G'}$ generates $G'$ in $G$. As in the base case, suppose $G/G' \cong \mathbb{Z}^n\oplus T$, where $T$ is the torsion part, and let $\chi(\overline{y})$ to be the atomic diagram of $T$. We consider $\phi(\overline{x},\overline{y},\overline{z})$ to be the conjunction of the following:
\begin{enumerate}
\item $(\bigwedge\limits_{m>1}m x\ \hat{\neq}\ 1) \wedge (\bigwedge\limits_{\det(M){\neq}  \pm1} \forall \overline{z}\bigwedge\limits_{\langle i_j\rangle \in k^n} M\overline{z}\ \hat{\neq}\ \overline{x}+\langle y_{i_j}\rangle)$
\item $\hat{\chi}(\overline{y})$ 
\item $\tilde{\phi}_{G'}(\overline{z})$
\end{enumerate}

Notice that in (1) we still think of $G/G'$ additively for clarity, while we should really think of it multiplicatively since $G$ is no longer abelian. This is very similar to the sentence in the base case, but everything is relativized. We write $a\ \hat{=}\ b$ to denote that $\exists g (G'(g) \wedge a=b g)$, i.e.~$a$ and $b$ are equal in the quotient group, and this is $\Sigma_1$. And we write $a\ \hat{\neq}\ b$ to denote the negation of $a\ \hat{=}\ b$, which is $\Pi_1$. So, the complexity of (1) is still $\Pi_1$. For $\hat{\chi}(\overline{y})$, again we replace all the $=$ and $\neq$ in $\chi$ by the relativized versions $\hat{=}$ and $\hat{\neq}$, hence making it d-$\Sigma_1$. The relativization doesn't increase the complexity of $\phi_{G'}$, thus the whole conjunct is d-$\Sigma_1$. 

Now (2) says $\overline{y}$ is $T$ in $G/G'$, (1) says $\overline{x}$ generates $\mathbb{Z}^n$ in $G/G'$, and (3) says $\overline{z}$ generates $G'$ in $G$. Thus, $\overline{x}, \overline{y}$, and $\overline{z}$ together generate $G$, hence proving the claim. The theorem now follows from the generating set lemma (Lemma \ref{generating set lemma}).
\end{proof}

We now turn our attention to index sets. We give some results on the completeness of index sets of nilpotent groups, but we need a group-theoretic lemma and a definition first.

\begin{proposition}[Finitely-generated nilpotent group lemma]\label{f.g. nilpotent group lemma}
Every finitely-generated infinite nilpotent group has infinite center. In particular, its center is isomorphic to $\mathbb{Z}\times A$ for some abelian group $A$.
\end{proposition}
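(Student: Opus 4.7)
The plan is to split the proposition into two claims: (i) $Z(G)$ is infinite, and (ii) $Z(G) \cong \mathbb{Z} \times A$ for some abelian group $A$. Granting (i), part (ii) follows quickly: finitely-generated nilpotent groups are polycyclic (as noted right after the definitions), so every subgroup is finitely-generated; hence $Z(G)$ is a finitely-generated abelian group, and if infinite the structure theorem writes it as $\mathbb{Z}^r \oplus T$ with $r \geq 1$ and $T$ finite. Setting $A = \mathbb{Z}^{r-1} \oplus T$ then gives $Z(G) \cong \mathbb{Z} \times A$.

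For (i), I propose to work with the upper central series $1 = Z_0(G) \subsetneq Z_1(G) \subsetneq \cdots \subsetneq Z_k(G) = G$ from the definition above. Because $G$ is infinite and this series has only finitely many terms, at least one quotient $Z_i(G)/Z_{i-1}(G)$ must be infinite; I will then propagate the infinitude downward one step at a time until I reach $Z_1(G) = Z(G)$. For the downward step, fix a finite generating tuple $g_1, \ldots, g_n$ of $G$ and consider
$$\phi_i \colon Z_i(G)/Z_{i-1}(G) \to \bigl(Z_{i-1}(G)/Z_{i-2}(G)\bigr)^n, \quad x \mapsto \bigl([x,g_1],\ldots,[x,g_n]\bigr).$$
Using the commutator identity $[xy,g] = [x,g]^y [y,g]$ together with $[Z_{i-1}(G), G] \subseteq Z_{i-2}(G)$, one checks that $\phi_i$ is a well-defined group homomorphism. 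Its kernel consists of cosets $x Z_{i-1}$ for which $x Z_{i-2}$ commutes in $G/Z_{i-2}$ with each $g_j Z_{i-2}$, and hence with all of $G/Z_{i-2}$; by definition of the upper central series this forces $x \in Z_{i-1}$, so $\phi_i$ is injective. Therefore infinitude of $Z_i/Z_{i-1}$ forces infinitude of $Z_{i-1}/Z_{i-2}$, and iterating yields $Z(G)$ infinite.

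The only delicate part is checking that $\phi_i$ is well-defined and a homomorphism modulo $Z_{i-2}(G)$; this reduces to noting that the discrepancies $[x,g]^y [x,g]^{-1}$, along with those arising from shifting $x$ by an element of $Z_{i-1}$, all land in $[Z_{i-1}(G), G] \subseteq Z_{i-2}(G)$ and so vanish in the target. I do not foresee a conceptual obstacle---the proposition is a classical fact about finitely-generated nilpotent groups---but this commutator bookkeeping is the one step that has to be carried out with some care.
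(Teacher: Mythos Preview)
Your proof is correct, but it takes a different route from the paper's. The paper argues by contrapositive and induction on the nilpotency class: assuming $Z(N)$ is finite of order $k$, it uses the identity $[g^k,h]=[g,h]^k=1$ for $g\in Z_2(N)$ to show that $Z_2(N)/Z(N)$ has exponent dividing $k$; hence $Z(N/Z(N))=Z_2(N)/Z(N)$ is a finitely-generated abelian group of bounded exponent, so finite, and the induction hypothesis applied to $N/Z(N)$ then forces $N$ itself to be finite. You instead work directly: locate an infinite quotient $Z_i/Z_{i-1}$ in the upper central series and push infinitude downward via the injection $xZ_{i-1}\mapsto\bigl([x,g_1],\ldots,[x,g_n]\bigr)$ into $(Z_{i-1}/Z_{i-2})^n$. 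Your approach is more constructive and never passes through a contrapositive; the paper's approach is marginally shorter because the single exponent bound replaces your injectivity verification. Both arguments hinge on the same commutator identity $[xy,g]=[x,g]^y[y,g]$ and on finite generation of $G$, so neither is deeper than the other---they are two standard proofs of this classical fact.
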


\begin{proof}

We induct on the nilpotency class. The statement is obvious when the nilpotency class is 1.

Suppose $N$ is a finitely-generated nilpotent group with finite center. It suffices to show that $N$ is finite. Let the order of the center $Z(N)$ be $k$. Then $Z(N)^k=1$. 

Let the upper central series of $N$ be $1 = Z_0(N) \triangleleft Z_1(N) \triangleleft Z_2(N) \triangleleft \cdots \triangleleft Z_p(N) = N$. For $g\in Z_2(N)$ and $h\in N$, one has $[g,h]\in Z(N)$. Thus, using the identity $[x y,z]=[y,z]^x[x,z]$, we have $[g^k,h]=[g,h]^k=1$, and so $g^k \in Z(N)$, i.e., $Z_2(N)/Z(N)$ has exponent dividing $k$.

Now consider $M=N/Z(N)$. We have $Z(M) = Z_2(N)/Z(N)$, thus has exponent dividing $k$. Since $N$ is finitely-generated and nilpotent, so is $M$, and so is $Z(M)$. Hence $Z(M)$ is finite.

But the nilpotency class of $M$ is less than that of $N$, so by induction hypothesis, $M$ is finite. Then $|N| = |M|\cdot|Z(N)|$ is also finite.
\end{proof}

\begin{definition}
A group is \emph{co-Hopfian} if it does not contain an isomorphic proper subgroup.
\end{definition}

Consider $G$ to be a finitely-generated non-co-Hopfian group. Then let $\phi:G\to G$ to be an injective endomorphism from $G$ onto one of its proper isomorphic subgroups. Then we can form the direct system $\xymatrix{G \ar[r]^\phi &G \ar[r]^\phi &G \ar[r]^\phi &\cdots}$, and write the direct limit as $\hat{G}$. Since every finite subset of $\hat{G}$ is contained in some finite stage, $\hat{G}$ is not finitely-generated, thus is not isomorphic to $G$. This observation will be used later.

We're now ready to prove the completeness result:

\begin{theorem}\label{nilpotent completeness}
The index set of a finitely-generated nilpotent group is $\Pi_2$-hard. Furthermore, the index set of a non-co-Hopfian finitely-generated nilpotent group $N$ is d-$\Sigma_2$-complete.
\end{theorem}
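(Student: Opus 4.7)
The upper bound $I(N) \in \text{d-}\Sigma_2$ follows from applying Theorem \ref{polycyclic scott sentence} to the polycyclic group $N$, so I focus on the two hardness claims. For the $\Pi_2$-hardness, I plan to reduce the $\Pi_2$-complete set $\mathrm{Tot} = \{e : \phi_e \text{ total}\}$ to $I(N)$. By Proposition \ref{f.g. nilpotent group lemma}, $Z(N)$ has some finite $\mathbb{Z}$-rank $r \ge 1$. Uniformly in $e$, I would construct a computable nilpotent group abstractly isomorphic to $N \oplus \bigoplus_{n \in S_e} \mathbb{Z}$ where $S_e = \{n : \phi_e(n)\uparrow\}$, via the presentation
\[
G_e = \pres{\bar{a}, t_0, t_1, \ldots}{R_N,\ [t_i,t_j]=1,\ [t_i,a_k]=1,\ \{t_n=1 : \phi_e(n)\downarrow\}}
\]
in which the $t_i$'s are central. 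When $\phi_e$ is total, every $t_n$ is killed and $G_e \cong N$; when $\phi_e$ is not total, $Z(G_e)$ has $\mathbb{Z}$-rank at least $r+1$ and hence $G_e \not\cong N$.

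For the d-$\Sigma_2$-completeness in the non-co-Hopfian case, a clean approach is to reduce from $I(\mathbb{Z})$, which is d-$\Sigma_2$-complete by Knight--Saraph \cite{Kn}. Given a computable subgroup $A \le \mathbb{Q}$ with a distinguished element, form the central pushout $K_e = N \ast_{\langle z \rangle} A$ identifying an infinite-order central $z \in N$ (provided by Proposition \ref{f.g. nilpotent group lemma}) with the distinguished element of $A$. When $A \cong \mathbb{Z}$ and the distinguished element is a generator of $A$, $K_e \cong N$; otherwise the image of $z$ acquires new roots in the center, so $Z(K_e)$ strictly extends $Z(N)$ and $K_e \not\cong N$. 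This yields a computable reduction $I(\mathbb{Z}) \le_m I(N)$, whence $I(N)$ is d-$\Sigma_2$-hard. The role of the non-co-Hopfian hypothesis becomes visible in the alternative, more direct approach signalled by the discussion preceding the theorem, which builds $G_{a,b}$ from the direct limit $\hat N$: each new element of $W_a$ adjoins a free central generator, and each new element of $W_b$ extends the tower $N \hookrightarrow \phi^{-1}(N) \hookrightarrow \cdots$ inside $\hat N$ by one step and absorbs the oldest surviving central generator; one then verifies $G_{a,b} \cong N$ exactly on the d-$\Sigma_2$-complete set $\mathrm{Fin} \times \mathrm{Inf} = \{(a,b) : W_a \text{ finite and } W_b \text{ infinite}\}$.

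The principal technical obstacle in both constructions is producing a uniformly computable atomic diagram from the c.e.\ relations; this is handled by the standard delayed-enumeration trick, using that each finite-stage approximation is polycyclic and hence has decidable word problem, so atomic facts can be committed stage-by-stage without waiting for the full c.e.\ behaviour to unfold. The secondary subtlety, specific to the direct-limit construction, is coordinating the $W_a$ and $W_b$ streams so that each failure mode leaves a rigid obstruction detectable via either an extra $\mathbb{Z}$-summand in the center of $G_{a,b}$, by Proposition \ref{f.g. nilpotent group lemma}, or the non-finite-generation of $\hat N$ noted preceding the theorem.
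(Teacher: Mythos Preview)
There are genuine gaps in both hardness arguments. For $\Pi_2$-hardness, the presented group $G_e$ is not computable: deciding whether $t_n = 1$ in $G_e$ is exactly deciding whether $\phi_e(n)\!\downarrow$, so the word problem of $G_e$ encodes the halting problem for $\phi_e$. ``Delayed enumeration'' cannot repair this---once the diagram commits the atomic fact $t_n \neq 1$, you can never later impose $t_n = 1$. The paper's remedy is never to kill such a generator but to \emph{absorb} it by identifying it with a sufficiently large (hence fresh) power of a fixed infinite-order element $b \in Z(N)$ furnished by Proposition~\ref{f.g. nilpotent group lemma}; no committed fact is violated, and in the limit one obtains $N$ or $N \times \mathbb{Z}$ according to a single $\Pi_2$ condition.

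For d-$\Sigma_2$-hardness, your pushout reduction from $I(\mathbb{Z})$ nowhere uses that $N$ is non-co-Hopfian, so if correct it would prove d-$\Sigma_2$-completeness for \emph{every} infinite finitely-generated nilpotent group---a statement the paper explicitly leaves open in the co-Hopfian case. The specific gap is that ``$Z(K_e)$ strictly extends $Z(N)$'' does not by itself give $K_e \not\cong N$; you need an isomorphism invariant, and the analysis splits into cases (is $A_e$ finitely generated? is the distinguished element a generator?) that you have not carried out. Separately, the map $e\mapsto K_e$ is undefined when $e$ does not code a group, and when $A_e$ is not finitely generated the pushout is not polycyclic, so your uniform word-problem argument does not apply. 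Your alternative direct-limit sketch is closer to the paper's proof but has the roles reversed: if each $W_b$-event extends the tower inside $\hat N$, then $W_b$ infinite forces the limit to be $\hat N$, not $N$, so $G_{a,b}\cong N$ fails exactly on $\mathrm{Fin}\times\mathrm{Inf}$ rather than holding there. The paper instead runs two \emph{independent} mechanisms with three target groups $\hat N$, $N$, $N\times\mathbb{Z}$: applying the non-co-Hopfian embedding $\phi$ pushes $N$ toward $\hat N$ (this handles the $\Sigma_2$ coordinate), while creating and then absorbing a single central generator toggles between $N$ and $N\times\mathbb{Z}$ (this handles the $\Pi_2$ coordinate).
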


\begin{proof}
We start by proving the second statement. Fix $\phi$ to be an injective endomorphism of $N$ onto one of its proper isomorphic subgroups. Then we apply the construction above to obtain $\hat{N}$.

For a d-$\Sigma_2$ set $S$, we write $S=S_1\smallsetminus S_2$, where $S_1\supseteq S_2$ are both $\Sigma_2$ sets, and we let $S_{1,s}$ and $S_{2,s}$ be uniformly computable sequences of sets such that $n\in S_i$ if and only if for all but finitely many $s$, $n\in S_{i,s}$. Then we construct 
$$G_n\cong
\begin{cases}
\hat{N}, & n\notin S_1\\
N, & n\in S_1\smallsetminus S_2\\
N\times\mathbb{Z}, & n\in S_1\cap S_2.
\end{cases}$$

To build the diagram of $G_n$, at stage $s$, we build a finite part of $G_n$ and a partial isomorphism to one of these three groups based on whether $n\notin S_{1,s}$, $n\in S_{1,s}\smallsetminus S_{2,s}$, or $n\in S_{1,s}\cap S_{2,s}$. It is clear how to build the partial isomorphisms, since all the groups are computable, so we only need to explain how we can change between these groups when $S_{1,s}$ and $S_{2,s}$ change.

To change from $N$ to $\hat{N}$, we apply $\phi$. Note that at every finite stage, the resulting group will still be isomorphic to $N$, but in the limit, it will be $\hat{N}$ if and only if we apply $\phi$ infinitely often, i.e., $n \notin S_1$, as desired.

To change from $N$ to $N\times\mathbb{Z}$, we simply create a new element $a$ that has infinite order and commutes with everything else. To change from $N\times \mathbb{Z}$ to $N$, we choose an element $b$ of infinite order in $Z(N)$ by the finitely-generated nilpotent group lemma (Lemma \ref{f.g. nilpotent group lemma}). We collapse the new element $a$ by equating it with a big enough power of $b$. Again, this will result in the limiting group being $N$ if we collapse $b$ infinitely often, i.e.~$n\notin S_2$, and $N\times\mathbb{Z}$ if we collapse $b$ only finitely often, i.e.~$n\in S_2$.

The second statement follows from doing only the $\Pi_2$ part of the above argument, i.e.~constructing $$G_n\cong
\begin{cases}
N, & n\notin S_2\\
N\times\mathbb{Z}, & n\in S_2.
\end{cases}$$
\end{proof}

Note that here we do not have a completeness result for the class of co-Hopfian finitely-generated nilpotent groups. For a discussion about this ad-hoc class of groups, we refer the readers to \cite{Be03}. However, ``most'' finitely-generated nilpotent groups are non-co-Hopfian, including the finitely-generated free nilpotent groups, and we have the following:

\begin{corollary}
The index set of a finitely-generated free nilpotent group is d-$\Sigma_2$-complete.
\end{corollary}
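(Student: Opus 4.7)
The plan is to sandwich the index set using two results from earlier in this section. Since a finitely-generated free nilpotent group $N_{p,m}$ is finitely-generated and nilpotent, it is polycyclic, and so Theorem \ref{polycyclic scott sentence} gives a computable d-$\Sigma_2$ Scott sentence; this immediately yields the d-$\Sigma_2$ upper bound on $I(N_{p,m})$. For hardness, I would appeal to Theorem \ref{nilpotent completeness}, whose conclusion applies as soon as the group is non-co-Hopfian.

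Consequently, the substantive content is to exhibit an injective endomorphism of $N_{p,m}$ onto a proper subgroup. The natural candidate is the map $\phi\colon N_{p,m}\to N_{p,m}$ defined on generators by $x_i\mapsto x_i^2$. Because $N_{p,m}=F(m)/F(m)_{p+1}$ is relatively free in the variety of nilpotent groups of class at most $p$, any assignment on generators extends to a well-defined endomorphism, so $\phi$ exists. Properness of $\operatorname{Im}(\phi)$ is easy: abelianizing sends it to $2\mathbb{Z}^m\subsetneq\mathbb{Z}^m$, so $\phi$ cannot be surjective.

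The main obstacle is verifying that $\phi$ is injective. My plan is to invoke the standard fact that a finitely-generated torsion-free nilpotent group embeds in its Mal'cev completion, a uniquely divisible torsion-free nilpotent group in which squaring is a bijection; restricted to $N_{p,m}$ this forces $\phi$ to be injective. If one prefers to avoid Mal'cev completions, an alternative I could run is an induction on the nilpotency class using the lower central series filtration of $N_{p,m}$: each successive quotient is a finitely-generated free abelian group on which multiplication by $2$ is injective, and centrality allows one to assemble the step-by-step injectivity into injectivity of $\phi$ on the whole group.

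Either route shows that $N_{p,m}$ is non-co-Hopfian. Combining this with Theorem \ref{nilpotent completeness} yields d-$\Sigma_2$-hardness, and together with the d-$\Sigma_2$ upper bound above gives the stated d-$\Sigma_2$-completeness of $I(N_{p,m})$.
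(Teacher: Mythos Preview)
Your approach is exactly the paper's: the corollary is stated there without proof, relying on Theorem~\ref{nilpotent completeness} together with the (asserted) fact that finitely-generated free nilpotent groups are non-co-Hopfian; you simply supply the verification the paper omits. One caveat on your Mal'cev sketch: the endomorphism $\phi$ sending $x_i\mapsto x_i^2$ is \emph{not} the squaring function $g\mapsto g^2$ (which is not a homomorphism in a nonabelian group), so bijectivity of the latter in the completion does not directly give injectivity of $\phi$---your lower-central-series argument, on the other hand, is correct as written and suffices.
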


\begin{note}
Using the nilpotent residual property (Lemma \ref{nilpotent residual property}), we can show the $\text{d-}\Sigma_2$ completeness result for free nilpotent groups within the class of free nilpotent groups, provided that the number of generators is more than the step of the group. For the definition and more discussion on the complexity within a class of groups, we refer the reader to \cite{Ca12}.
\end{note}

To close this section, we state a proposition about co-Hopfian and non-co-Hopfian groups.

\begin{proposition}
The index set of a computable finitely-generated non-co-Hopfian group is $\Sigma_2$-hard. On the other hand, a computable finitely-generated co-Hopfian group $G$ has a computable d-$\Sigma_2$ Scott sentence.
\end{proposition}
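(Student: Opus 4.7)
The plan is to handle the two statements separately, each by specializing a technique already developed in this section.

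For the second statement, I would apply the Generating Set Lemma (Lemma \ref{generating set lemma}) with $\psi(\overline{x}) := \langle \overline{x} \rangle \cong G$ from the introduction, namely $\bigwedge_{w \in R} w(\overline{x}) = 1 \wedge \bigwedge_{w \notin R} w(\overline{x}) \neq 1$. This formula is computable $\Pi_1$, hence computable $\Sigma_2$, and it is non-empty because a fixed generating tuple $\overline{a}$ of $G$ satisfies it. If $\overline{x}$ satisfies $\psi$, then $a_i \mapsto x_i$ extends to an isomorphism from $G$ onto the subgroup $\langle \overline{x} \rangle \leq G$; co-Hopfianness then forces $\langle \overline{x} \rangle = G$. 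Thus every $\overline{x}$ satisfying $\psi$ generates $G$, and the Generating Set Lemma yields the required d-$\Sigma_2$ Scott sentence.

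For the first statement, I would follow the direct-limit construction used in the proof of Theorem \ref{nilpotent completeness}. Fix an injective endomorphism $\eta : G \to G$ whose image is a proper subgroup, and form $\hat{G}$, the direct limit of $G \xrightarrow{\eta} G \xrightarrow{\eta} \cdots$. As observed in the paragraph preceding Theorem \ref{nilpotent completeness}, $\hat{G}$ is not finitely generated, hence not isomorphic to $G$. To reduce a $\Sigma_2$-complete set $S$ to $I(G)$, I would fix a computable approximation $\{S_s\}$ with $n \in S$ iff $n \in S_s$ for cofinitely many $s$, and construct a uniformly computable family $G_n$ as in the cited theorem, but using only the switching mechanism between $G$ and $\hat{G}$: at stage $s$, apply $\eta$ precisely when $n \notin S_s$, and otherwise just extend the atomic diagram of the current copy. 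If $n \in S$, only finitely many shifts occur and $G_n$ stabilizes to a copy of $G$; if $n \notin S$, $\eta$ is applied infinitely often and $G_n \cong \hat{G}$. Hence $G_n \cong G$ iff $n \in S$, witnessing $\Sigma_2$-hardness.

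The main point requiring care is the verification in Part 1 that the stage-by-stage construction really does produce a computable atomic diagram with the intended limiting isomorphism type; but this is precisely the $\Sigma_2$ half of the $N$-vs.-$\hat{N}$ construction in Theorem \ref{nilpotent completeness}, so I would appeal to that argument rather than reproduce it. In Part 2, the decisive observation is narrow: co-Hopfianness lets the $\Pi_1$ formula $\langle \overline{x} \rangle \cong G$ pin down generation, obviating a $\forall y \bigvee_w w(\overline{x}) = y$ clause that would otherwise push the complexity of the defining formula to $\Pi_2$.
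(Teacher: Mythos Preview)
Your proposal is correct and matches the paper's proof essentially line for line: the paper uses the $\Pi_1$ formula $\langle\overline{x}\rangle\cong G$ together with co-Hopfianness and the Generating Set Lemma for the second statement, and explicitly says the first statement follows by ``running the $\Sigma_2$ part of the argument in Theorem \ref{nilpotent completeness}.'' Your added remark about why $\Pi_1$ (rather than $\Pi_2$) suffices in the co-Hopfian case is a nice clarification but not a departure from the paper's approach.
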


\begin{proof}
The first statement is proved by running the $\Sigma_2$ part of the argument in Theorem \ref{nilpotent completeness}.

For the second statement, consider the computable $\Pi_1$ formula $\phi(\overline{x}) \equiv \langle \overline{x} \rangle \cong G$. This is a non-empty formula, and since $G$ is co-Hopfian, every realization of $\phi$ in $G$ generates $G$. Thus by the generating set lemma (Lemma \ref{generating set lemma}), $G$ has a computable d-$\Sigma_2$ Scott sentence.
\end{proof}

\section{Some examples of finitely-generated solvable groups}

In this section, we continue to look at the bigger, but still somewhat tame, class of finitely-generated solvable groups. Note that even though the class of solvable groups is closed under subgroups, the class of finitely-generated solvable groups is not. This leads to an inherent difficulty when dealing with solvable groups, namely a group could possibly contain a higher-complexity subgroup. For example, the lamplighter group, which we shall define later and prove to have a computable d-$\Sigma_2$ Scott sentence, contains a subgroup isomorphic to $\mathbb{Z}^\omega$, whose index set is $m$-complete $\Pi_3$.

We start this section with the definition of the (regular, restricted) wreath product, which is a technique often used in group theory to construct counterexamples:

\begin{definition}
For two groups $G$ and $H$, we define the \emph{wreath product} $G\wr H$ of $G$ by $H$ to be the semidirect product $B\rtimes H$, where the \emph{base group} $B$ is the direct sum of $|H|$ copies of $G$ indexed by $H$, and the action of $H$ on $B$ is by shifting the coordinates by left multiplication.
\end{definition}

One important example of a finitely-generated solvable group is the lamplighter group. It is usually defined as the wreath product $\mathbb{Z}/2\mathbb{Z} \wr \mathbb{Z}$, but we will be looking at two generalizations of it, $(\mathbb{Z}/d\mathbb{Z}) \wr \mathbb{Z}$ and $\mathbb{Z} \wr \mathbb{Z}$.

\begin{theorem}
The \emph{lamplighter groups} $L_d = (\mathbb{Z}/d\mathbb{Z}) \wr \mathbb{Z}$ each have computable d-$\Sigma_2$ Scott sentences. Furthermore, their index sets are $m$-complete d-$\Sigma_2$.
\end{theorem}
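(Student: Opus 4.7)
The plan is to mirror the strategy of Theorem~\ref{polycyclic scott sentence} for the upper bound (via Lemma~\ref{generating set lemma}) and Theorem~\ref{nilpotent completeness} for the lower bound, with the chief new difficulty being that $Z(L_d)=1$. For the d-$\Sigma_2$ Scott sentence, I would apply Lemma~\ref{generating set lemma} with a computable $\Sigma_2$ formula $\phi(x,y)$ picking out pairs that behave like the canonical generators $(a,t)$. The key observation is that the base group $B=\bigoplus_{\mathbb{Z}}\mathbb{Z}/d\mathbb{Z}$ of $L_d$ coincides with its torsion subgroup, which is $\Sigma_1$-definable by $\bigvee_k z^k=1$. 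Using this, the condition that the image of $y$ in $L_d/B\cong\mathbb{Z}$ is $\pm 1$ (equivalently, $y$ is not a proper power modulo $B$) can be expressed as $\forall z\,\forall k>1\,\forall b\,(b\in B\to y\neq z^k b)$, which is $\Pi_1$. Combining this with the $\Pi_1$ clauses that $x$ has order exactly $d$, $y$ has infinite order, and $[x,y^n x y^{-n}]=1$ for $n\neq 0$, one gets a candidate formula. The \textbf{main obstacle} here is to additionally force $x$ to generate $B$ as a $\mathbb{Z}[\langle y\rangle]$-module---equivalently, that $x$ corresponds to a unit in $(\mathbb{Z}/d\mathbb{Z})[t,t^{-1}]$. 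For $d$ prime this is clean (the units are monomials $u\,t^k$), but for composite $d$ the unit group is more intricate (e.g.\ $1+2t\in(\mathbb{Z}/4\mathbb{Z})[t,t^{-1}]^\times$), so $\phi$ would need a $\Sigma_2$ existential witness encoding the unit structure.

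For d-$\Sigma_2$-completeness of the index set, the upper bound is immediate from the Scott sentence. For the lower bound, following the template of Theorem~\ref{nilpotent completeness}, given a d-$\Sigma_2$ set $S=S_1\smallsetminus S_2$ with $\Sigma_2$-approximations $S_{i,s}$, I would build a uniformly computable family
$$G_n\cong\begin{cases}\widehat{L_d},& n\notin S_1,\\ L_d,& n\in S_1\smallsetminus S_2,\\ G^+,& n\in S_1\cap S_2,\end{cases}$$
where $\widehat{L_d}$ is the direct limit of $L_d\xrightarrow{\phi}L_d\xrightarrow{\phi}\cdots$ under the non-surjective injective endomorphism $\phi(a)=a$, $\phi(t)=t^2$ (witnessing that $L_d$ is non-co-Hopfian), and $G^+$ is a third pairwise non-isomorphic group. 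The $L_d\leftrightarrow\widehat{L_d}$ transitions proceed exactly as in the nilpotent proof by applying $\phi$ or not at stages depending on $S_{1,s}$.

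The \textbf{main obstacle}, and the crucial deviation from the nilpotent argument, is the transition $G^+\to L_d$. The nilpotent proof takes $G^+=N\times\mathbb{Z}$ and collapses the new central element against a large power of an infinite-order central element of $N$ supplied by Proposition~\ref{f.g. nilpotent group lemma}, but $Z(L_d)=1$, so this approach fails directly. I would circumvent the centrality issue by taking $G^+=L_d\ast\mathbb{Z}$: in the free product, the new generator $c$ carries no committed commutation relations with $L_d$, and $L_d\ast\mathbb{Z}\ncong L_d$ because the former contains a non-abelian free subgroup while $L_d$ is metabelian. The collapse $c=g$ then only requires finding $g\in L_d$ of infinite order satisfying the finitely many inequalities and non-commutation constraints committed for $c$; such $g$ exists in abundance since $L_d$ is infinite and only finitely many constraints are active at any stage. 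Coordinating these commitments across the finite mind-changes of $S_{1,s}$ and $S_{2,s}$, so that the resulting atomic diagram falls into exactly one of the three target isomorphism classes in the limit, is the principal technical work.
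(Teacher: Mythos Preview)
Your Scott-sentence half is close in spirit to the paper's: both apply Lemma~\ref{generating set lemma} after pinning down that $t$ has $\mathbb{Z}$-coordinate $\pm 1$ in the semidirect product. The difference is that the paper does not try to characterize all generating pairs; it only needs $\phi$ to be nonempty. Concretely, the paper adds the $\Pi_1$ conjunct $\langle a,t\rangle\cong L_d$ (in the standard presentation) together with a clause
\[
\forall b\,\bigwedge_{\overline{k}}\ \prod_i (b^{k_i})^{t^i}\neq a
\]
over $\overline{k}$ with at least two nonzero entries, whose intent is to force $a$ to have support a single coordinate in $B=(\mathbb{Z}/d\mathbb{Z})^{(\mathbb{Z})}$; order~$d$ then makes that coordinate a unit. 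So your worry about ``encoding the unit structure'' is largely avoidable: restrict $\phi$ to standard-looking pairs rather than all generating pairs. (Your observation that $(\mathbb{Z}/d\mathbb{Z})[t,t^{-1}]$ has non-monomial units for composite $d$ is a genuine subtlety worth keeping in mind when writing down such a clause, but it does not force a $\Sigma_2$ witness.)

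Your completeness argument, however, has a real gap. Every homomorphism $L_d\ast\mathbb{Z}\to L_d$ factors through the metabelianization, since $L_d''=1$; in particular any such map kills $(L_d\ast\mathbb{Z})''$. But the free product contains nontrivial double commutators such as $\bigl[[c,a],[c,t]\bigr]\neq 1$. Once your finite atomic diagram has recorded this inequality (and it must eventually, if you are genuinely building $L_d\ast\mathbb{Z}$ in the $n\in S_1\cap S_2$ case), no assignment $c\mapsto g\in L_d$ can collapse the partial structure into $L_d$: the image of that double commutator is forced to be $1$ regardless of $g$. So ``such $g$ exists in abundance'' is false---the obstruction is not a finite list of constraints on $g$ but a universal identity in the target group.

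The paper avoids this by staying inside the metabelian world: it takes $G^+=(\mathbb{Z}/d\mathbb{Z})\wr\mathbb{Z}^2$, introduces a new generator $s$ for the second $\mathbb{Z}$-factor when moving from $L_d$ to $G^+$, and collapses back by setting $s=t^N$ for large $N$. This is a genuine group homomorphism $(\mathbb{Z}/d\mathbb{Z})\wr\mathbb{Z}^2\twoheadrightarrow L_d$ (induced by $\mathbb{Z}^2\twoheadrightarrow\mathbb{Z}$), so every relation already committed survives. Replacing your free-product $G^+$ with this wreath-product $G^+$ fixes the argument with essentially no other changes to your template.
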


\begin{proof}
To find the Scott sentence, we will use the generating set lemma (Lemma \ref{generating set lemma}). Consider the formula $$\phi(a,t) \equiv (\langle a,t\rangle\cong L_d) \wedge (\forall s \bigwedge\limits_{i>1} a^t \neq a^{(s^i)}) \wedge (\forall b \bigwedge\limits_{\overline{k}}\prod\limits_{i}(b^{k_i})^{t^{i}}\neq a).$$

In the second conjunct, $s$ ranges over the group elements. In the second infinite conjunction, $\overline{k}$ ranges over all sequences in $\mathbb{Z}$ indexed by $\mathbb{Z}$ and has only finitely many, but at least two, nonzero entries. We first observe that the standard generator satisfies this formula, so $\phi$ does not define the empty set. 

Now let $(a,t)$ be a tuple satisfying $\phi$. The first conjunct implies that $a$ is in the base group, and $t$ is not in the base group. The second conjunct says that if we think of $t$ as an element of the semidirect product $L_d\cong (\mathbb{Z}/d\mathbb{Z})^\mathbb{Z} \rtimes \mathbb{Z}$, then the $\mathbb{Z}$-coordinate of $t$ is $\pm 1$. The third coordinate then says that $a$ does not have more than one nonzero entries, and hence (by the first conjunct) the only nonzero entry must be co-prime to $d$. Thus, $a$ generates a copy of $\mathbb{Z}/d\mathbb{Z}$. Using conjugation by $t$ to generate the other copies of $\mathbb{Z}/d\mathbb{Z}$, we see $a$ together with $t$ generate the whole group. So by the generating set lemma (Lemma \ref{generating set lemma}), $L_d$ has a computable d-$\Sigma_2$ Scott sentence.

To show completeness of the index set, fix $\phi$ to be an injective endomorphism of $L_d$ onto one of its proper isomorphic subgroups, say mapping the standard generators $(a,t)$ to $(a,t^2)$. Let $\hat{L}_d$ be the direct limit of $\xymatrix{L_d \ar[r]^\phi &L_d \ar[r]^\phi &L_d \ar[r]^\phi &\cdots}$.

For a d-$\Sigma_2$ set $S$, we write $S=S_1\smallsetminus S_2$, where $S_1\supseteq S_2$ are both $\Sigma_2$ sets, and we let $S_{1,s}$ and $S_{2,s}$ be uniformly computable sequences of sets such that $n\in S_i$ if and only if for all but finitely many $s$, $n\in S_{i,s}$. Then we construct 
$$G_n\cong
\begin{cases}
\hat{L}_d, & n\notin S_1\\
L_d, & n\in S_1\smallsetminus S_2\\
(\mathbb{Z}/d\mathbb{Z})\wr\mathbb{Z}^2, & n\in S_1\cap S_2.
\end{cases}$$

As in the nilpotent case (Theorem \ref{nilpotent completeness}), we build a partial isomorphism of one of these groups in stages. To change between $L_d$ and $\hat{L}_d$ is the same as before, we apply $\phi$ whenever $n\notin S_{1,m}$, and keep building $L_d$ otherwise.

To change from $L_d$ to $(\mathbb{Z}/d\mathbb{Z})\wr\mathbb{Z}^2$, we create a new element $s$ to be the other generator of $\mathbb{Z}^2$, and equate it with a big enough power of $t$ to change back. This will result in the limiting group being $L_d$ if we collapse $s$ infinitely often, i.e.~$n\notin S_2$, and $(\mathbb{Z}/d\mathbb{Z})\wr\mathbb{Z}^2$ otherwise.
\end{proof}

\begin{theorem}
Let $L = \mathbb{Z} \wr \mathbb{Z}$. Then $L$ has a computable d-$\Sigma_2$ Scott sentence, and $I(L)$ is $m$-complete d-$\Sigma_2$.
\end{theorem}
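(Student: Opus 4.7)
My plan is to mirror the proof of the preceding lamplighter theorem in both halves. For the Scott sentence I would invoke the generating set lemma (Lemma~\ref{generating set lemma}) with the lamplighter formula augmented by one extra conjunct tailored to $\mathbb{Z}$. Writing $L = B \rtimes \langle t_0 \rangle$ with $B = \bigoplus_{\mathbb Z} \mathbb Z$ and $t_0$ the standard shift, the formula I would try is
\[
\phi(a,t) \equiv (\langle a,t\rangle \cong L) \wedge \Bigl(\forall s \bigwedge_{i>1} a^t \neq a^{(s^i)}\Bigr) \wedge \Bigl(\forall b \bigwedge_{\overline{k}} \prod_i (b^{k_i})^{t^i} \neq a\Bigr) \wedge \Bigl(\forall b \bigwedge_{k\geq 2} b^k \neq a\Bigr),
\]
where $\overline{k}$ ranges over integer sequences of finite support with at least two nonzero entries.

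Exactly as in the $L_d$ argument, I would show that the first three conjuncts force $a \in B$, $t$ to have $\mathbb Z$-coordinate $\pm 1$, and $a$ to have at most one nonzero base coordinate. The new fourth conjunct handles the fact that in $\mathbb Z$ only $\pm 1$ generates (unlike in $\mathbb Z/d\mathbb Z$, where any element coprime to $d$ does) by forbidding $a$ from being a proper power: if $a = b^k$ with $k \geq 2$ and $a \in B$, then matching $\mathbb Z$-coordinates forces $b \in B$, whence $a = kb$ additively in $B$, so the single entry of $a$ would have to be divisible by $k$. Together the four clauses then give $\langle a, t\rangle = L$, by conjugating $a$ through $t$-powers to span $B$ and then recovering the pure shift from $t$ by dividing out its base part. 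Non-emptiness follows from checking that the standard generators satisfy $\phi$; the one non-obvious point is conjunct~3, which I would verify by mapping to $L^{\mathrm{ab}} = \mathbb Z^2$ to rule out a hypothetical witness $b$ with nonzero $\mathbb Z$-coordinate and $\sum_i k_i = 0$. Since each conjunct is computable $\Pi_1$, $\phi$ is computable $\Sigma_2$, and Lemma~\ref{generating set lemma} delivers a computable d-$\Sigma_2$ Scott sentence.

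For the completeness, I would mimic the three-target switching construction from the nilpotent and lamplighter proofs. Fix an injective endomorphism $\psi: L \to L$ onto a proper subgroup, say $a \mapsto a,\ t_0 \mapsto t_0^2$, and let $\hat L$ be the direct limit of $L \xrightarrow{\psi} L \xrightarrow{\psi} \cdots$; then $\hat L$ is not finitely generated, while $\mathbb Z \wr \mathbb Z^2$ has abelianization $\mathbb Z^3 \neq \mathbb Z^2 = L^{\mathrm{ab}}$, so the three groups $\hat L$, $L$, $\mathbb Z \wr \mathbb Z^2$ are pairwise non-isomorphic. Given a d-$\Sigma_2$ set $S = S_1 \setminus S_2$ with uniform $\Sigma_2$ approximations $S_{1,s}, S_{2,s}$, I would build
\[
G_n \cong \begin{cases} \hat L, & n \notin S_1,\\ L, & n \in S_1 \setminus S_2,\\ \mathbb Z \wr \mathbb Z^2, & n \in S_1 \cap S_2, \end{cases}
\]
moving between $L$ and $\hat L$ by applying $\psi$ whenever $n$ is seen to leave $S_{1,s}$, moving from $L$ to $\mathbb Z \wr \mathbb Z^2$ by introducing a fresh element $s$ commuting with $t$ to serve as the second $\mathbb Z^2$-generator, and returning from $\mathbb Z \wr \mathbb Z^2$ to $L$ by collapsing $s$ onto a sufficiently large power of $t$.

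The main obstacle I anticipate is in the Scott-sentence half: the fourth-conjunct analysis must rule out every proper-power candidate $b \in L$, which is immediate once one reduces to $b \in B$ by matching $\mathbb Z$-coordinates, but does require separating the $k = 0$ and $m = 0$ subcases cleanly. A related subtlety is the nonemptiness verification for conjunct~3, where the abelianization trick above is the crucial ingredient. The completeness half should run essentially unchanged from the lamplighter argument, the only bookkeeping item being to choose the power of $t$ onto which $s$ is collapsed large enough to be consistent with the partial isomorphism built so far.
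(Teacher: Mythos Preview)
Your proposal is correct and matches the paper's argument in both halves: the paper's sketch uses the same lamplighter-style $\Pi_1$ formula and the same three-target switching between $\hat L$, $L$, and $\mathbb{Z}\wr\mathbb{Z}^2$ via the endomorphism $a\mapsto a,\ t\mapsto t^2$. The only cosmetic difference is that the paper folds your fourth conjunct into the third by enlarging the index set for $\overline{k}$ to also include sequences with a single nonzero entry not equal to $\pm1$, rather than writing $\forall b\,\bigwedge_{k\geq 2} b^k\neq a$ separately; the effect is identical.
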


\begin{proof}[Sketch of proof]
The proof is essentially the same as above. The $\Pi_1$ formula will be $$\phi(a,t) \equiv (\langle a,t\rangle\cong L) \wedge (\forall s \bigwedge\limits_{i>1} a^t \neq a^{(s^i)}) \wedge (\forall b \bigwedge\limits_{\overline{l},\overline{k}}\prod\limits_{i}(b^{k_i})^{t^{l_i}}\neq a).$$
The only difference is that we will also allow $\overline{k}$ to have only one nonzero entry which is not $\pm1$, in addition to $\overline{k}$'s which have at least two nonzero entries. This is to rule out the case where $a$ is a power of the standard generator.

For completeness, we will construct 
$$G_n\cong
\begin{cases}
\hat{L}, & n\notin S_1\\
L, & n\in S_1\smallsetminus S_2\\
\mathbb{Z}\wr\mathbb{Z}^2, & n\in S_1\cap S_2.
\end{cases}$$
\end{proof}

\begin{remark}
In fact, this theorem can be generalized to $\mathbb{Z}^n \wr \mathbb{Z}^m$. However, we will omit the proof in the interest of space. We have to add into $\phi(\overline{a},\overline{t})$ extra conjuncts to make sure that $\overline{a}$ generates a copy of $\mathbb{Z}^n$ and $\overline{t}$ generates $\mathbb{Z}^m$ modulo the base group, and the extra conjuncts are similar to what we did in the polycyclic case (Theorem \ref{polycyclic scott sentence}). In proving completeness, we use the direct limit and $\mathbb{Z}^n \wr \mathbb{Z}^{(m+1)}$ as the alternate structures.
\end{remark}

We now look at another class of groups, the Baumslag--Solitar groups, which are very closely related to the lamplighter groups. Indeed, in \cite{St06}, it was shown that $BS(1,n)$ converges to $\mathbb{Z} \wr \mathbb{Z}$ as $n\to\infty$. We shall see great similarity in the arguments used for these groups also.

\begin{definition}
The \emph{Baumslag--Solitar groups $BS(m,n)$} are two-generator one-relator groups given by the presentation:
$$BS(m,n)=\pres{a,b}{b a^m b^{-1} = a^n}$$
Note that $BS(m,n) \cong BS(n,m)$.
\end{definition}

\begin{theorem}
$BS(m,n)$ is solvable if and only if $|m|=1$ or $|n|=1$, in which case it is also not polycyclic and its derived length is 2.
\end{theorem}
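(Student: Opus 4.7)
The plan is to handle the two directions of the ``if and only if'' together with the derived-length and polycyclicity claims as separate steps. Since $BS(m,n) \cong BS(n,m)$, I may assume $|m| \le |n|$, and the non-degenerate case will be $|n| \ge 2$.

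For the forward direction of the characterization, I would show that if $|m|, |n| \ge 2$ then $BS(m,n)$ is non-solvable by exhibiting a non-abelian free subgroup. Viewing $BS(m,n)$ as the HNN extension of $\langle a \rangle \cong \mathbb{Z}$ with associated subgroups $\langle a^m \rangle$ and $\langle a^n \rangle$ and stable letter $b$, a ping-pong argument on the Bass--Serre tree (or a direct Britton's lemma computation) shows that elements such as $b$ and $a b a^{-1}$ generate a free subgroup of rank $2$ whenever $|m|, |n| \ge 2$; this is the classical theorem of Baumslag and Solitar. Since solvable groups contain no non-abelian free subgroup, $BS(m,n)$ is non-solvable in this case.

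For the backward direction, assume $|m| = 1$. By the symmetry $b \mapsto b^{-1}$ we may take $m = 1$. I would then identify $BS(1,n)$ explicitly with the semidirect product $\mathbb{Z}[1/n] \rtimes_\sigma \mathbb{Z}$, where $\sigma$ is multiplication by $n$, by mapping $a \mapsto (1,0)$ and $b \mapsto (0,1)$. The relation $bab^{-1} = a^n$ corresponds to $\sigma(1) = n$, so this defines a homomorphism; surjectivity follows because $b^{-k} a b^k \mapsto (n^{-k}, 0)$ realizes all denominators, and injectivity follows from the HNN normal-form theorem. Consequently the derived subgroup is exactly the base $\mathbb{Z}[1/n]$, which is abelian, so the derived length is at most $2$, and exactly $2$ when $|n| \ge 2$ since the group is then non-abelian.

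For non-polycyclicity, the same description makes the point easy: when $|n| \ge 2$, the derived subgroup $\mathbb{Z}[1/n]$ is not finitely generated, because any finite subset has bounded $n$-adic denominator while $\mathbb{Z}[1/n]$ contains $n^{-j}$ for arbitrarily large $j$. Every subgroup of a polycyclic group is finitely generated, so $BS(1,n)$ is not polycyclic. The main obstacle will be the non-solvability in the forward direction, which relies on producing a rank-$2$ free subgroup of $BS(m,n)$ for $|m|, |n| \ge 2$ via HNN/Bass--Serre methods; the remaining pieces reduce to the explicit computation that $BS(1,n) \cong \mathbb{Z}[1/n] \rtimes \mathbb{Z}$. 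The degenerate cases $|m| = |n| = 1$ (namely $\mathbb{Z}^2$ and the Klein bottle group, both polycyclic) are implicitly excluded from the ``not polycyclic, derived length $2$'' conclusion.
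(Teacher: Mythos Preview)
The paper states this theorem without proof; it is quoted as a known background fact about Baumslag--Solitar groups, with only the semidirect-product description $BS(1,n)\cong\mathbb{Z}[1/n]\rtimes\mathbb{Z}$ appearing later in the proof of the subsequent theorem. So there is nothing to compare against on the paper's side.

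Your outline is the standard argument and is correct. The backward direction and the derived-length/non-polycyclicity claims follow exactly from the identification $BS(1,n)\cong\mathbb{Z}[1/n]\rtimes\mathbb{Z}$, which the paper also uses. For the forward direction, the strategy of producing a rank-$2$ free subgroup when $|m|,|n|\ge 2$ via the HNN structure is right; just be a little careful with the specific generators. The clean statement is that in an HNN extension with stable letter $b$ over $\langle a\rangle$ with associated subgroups $\langle a^m\rangle$, $\langle a^n\rangle$ both proper, ping-pong on the Bass--Serre tree (equivalently Britton's lemma) gives that $b$ and $a b a^{-1}$ generate a free group because $a\notin\langle a^m\rangle$ and $a\notin\langle a^n\rangle$. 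Your observation that the degenerate cases $|m|=|n|=1$ (where $BS(1,1)\cong\mathbb{Z}^2$ and $BS(1,-1)$ is the Klein bottle group, both polycyclic) must be excluded from the ``not polycyclic, derived length $2$'' clause is well taken; the paper's statement is tacitly assuming the non-trivial case.
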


\begin{theorem}
For each $n$, the solvable Baumslag--Solitar group $BS(1,n)$ has a computable d-$\Sigma_2$ Scott sentence. Furthermore, its index set is $m$-complete d-$\Sigma_2$ for every n.
\end{theorem}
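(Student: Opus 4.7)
I will follow the two-step template of the lamplighter and $\mathbb{Z}\wr\mathbb{Z}$ proofs: find a computable $\Sigma_2$ generating formula and feed it to the Generating Set Lemma for the Scott sentence, then oscillate among three non-isomorphic groups to prove d-$\Sigma_2$-hardness of the index set. Throughout I assume $|n|\ge 2$; the cases $n=\pm 1$ give polycyclic groups already handled by Theorem~\ref{polycyclic scott sentence}. I use the identification $BS(1,n)\cong\mathbb{Z}[1/n]\rtimes\mathbb{Z}$ where $\beta$ acts on the base by multiplication by $n$.

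For the Scott sentence, I will take
\[
\phi(a,b)\;\equiv\;\langle a,b\rangle\cong BS(1,n)\ \wedge\ \bigwedge_{\substack{p\text{ prime}\\ p\nmid n}}\forall c\,(ca=ac\to a\neq c^{p}),
\]
a computable $\Pi_1$ formula. Parametrising $a=(a',k)$ and $b=(b',m)$ in the semidirect product and unpacking $bab^{-1}=a^n$ by comparing $\mathbb{Z}$-components forces $k=0$ (so $a$ lies in the base) and then $m=1$ (so $b$ projects to the generator of the top $\mathbb{Z}$). Under these constraints the centraliser of $a$ in $BS(1,n)$ is exactly the base, so the second conjunct becomes: no prime $p\nmid n$ divides $a'$, equivalently $a'$ is a unit of $\mathbb{Z}[1/n]$. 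For such $a'$ the $b$-conjugates of $a$ already exhaust the base, so $\langle a,b\rangle=BS(1,n)$. Non-emptiness is witnessed by the standard generators, and the Generating Set Lemma then produces a computable d-$\Sigma_2$ Scott sentence.

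For d-$\Sigma_2$-hardness, fix a prime $p\nmid n$ and consider the endomorphism $\psi\colon\alpha\mapsto\alpha^{p},\beta\mapsto\beta$, which preserves the relation ($\beta\alpha^{p}\beta^{-1}=\alpha^{np}=(\alpha^{p})^{n}$) and is injective with proper image $p\mathbb{Z}[1/n]\rtimes\mathbb{Z}$ (proper because $p$ is not a unit of $\mathbb{Z}[1/n]$). Let $\widehat{BS(1,n)}=\mathbb{Z}[1/(np)]\rtimes\mathbb{Z}$ be the direct limit along $\psi$; it is not finitely generated. Given a d-$\Sigma_2$ set $S=S_1\setminus S_2$ with computable approximations $S_{1,s}\supseteq S_{2,s}$, I will build
\[
G_{e}\;\cong\;\begin{cases}\widehat{BS(1,n)},&e\notin S_1,\\ BS(1,n),&e\in S_1\setminus S_2,\\ BS(1,n)\times\mathbb{Z},&e\in S_1\cap S_2.\end{cases}
\]
These are pairwise non-isomorphic, since $\widehat{BS(1,n)}$ is not finitely generated, and by the same semidirect-product calculation $Z(BS(1,n))=1$ while $Z(BS(1,n)\times\mathbb{Z})\cong\mathbb{Z}$.

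At stage $s$ I perform one of two interleaved moves. If $e\notin S_{1,s}$ I apply $\psi$ (adjoining a fresh $p$-th root of the current base generator) and, to avoid interference, enforce $c=1$ for any extra generator currently alive. If $e\in S_{1,s}$ I manage a single ``alive'' extra generator $c$ commuting with $a,b$: spawn a fresh one when $e\in S_{2,s}$ and none is alive, and kill it via $c=1$ when $e\notin S_{2,s}$. Since at most one $c$ is ever alive, and any alive $c$ is killed whenever a shift stage or an $e\notin S_{2,s}$ stage occurs, the three target behaviours follow by tracking which action dominates in the limit. The main obstacle is the absence of any non-trivial centre in $BS(1,n)$, which blocks the ``absorb into a preexisting central element'' trick of the nilpotent and lamplighter proofs; the create-and-kill procedure above sidesteps this by using disposable fresh generators that are collapsed by $c=1$ instead.
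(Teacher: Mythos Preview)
Your Scott-sentence argument is essentially the paper's: both write down a $\Pi_1$ formula forcing $a$ into the base with $b$ projecting to $1\in\mathbb{Z}$, and both force the base coordinate of $a$ to be a unit of $\mathbb{Z}[1/n]$ by forbidding divisibility by integers coprime to $n$. Your centraliser clause and the paper's unrestricted $\forall b$ are equivalent once $a$ is in the base, so this half is fine.

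The $\Pi_2$ half of your hardness construction does not work. You introduce $c$ as a \emph{new} element of the universe and commit to $ca=ac$ and $cb=bc$. To ``kill'' $c$ you must re-embed the finite partial diagram you have already written into $BS(1,n)$; but any image of $c$ must lie in the centraliser of the images of $a,b$, which is $Z(BS(1,n))=\{1\}$, and the identity is already a distinct element of your universe. So there is no consistent target for $c$, and the step ``kill it via $c=1$'' is impossible---once $c$ is a distinct point of $\omega$ you cannot identify it with the existing identity element. You correctly diagnosed the obstacle (trivial centre) but the create-and-kill workaround does not sidestep it; it runs straight into it.

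The paper's fix is to choose a third group in which the extra generator commutes with $t$ \emph{but not with $a$}: it uses $(B^{\mathbb{Z}})\rtimes\mathbb{Z}^2$, where the new generator $s$ shifts the copies of $B$ and satisfies $st=ts$. To collapse back one sets $s=t^{N}$ for $N$ large enough that $t^{N}$ has not yet appeared; then every shifted copy $s^{k}as^{-k}$ becomes $a^{n^{Nk}}$, and the partial diagram embeds in $BS(1,n)$. The point is that $\langle t\rangle$ is an infinite set of legitimate absorption targets for $s$, whereas your central $c$ has only the single (already used) target $1$.
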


\begin{proof}
$BS(1,n)$ has the semidirect product structure $B \rtimes \mathbb{Z}$ where $B=\mathbb{Z}[\frac{1}{n},\frac{1}{n^2},\dots]=\{ \displaystyle\frac{x}{y} : y|n^k \text{ for some }k\}$, and the action of $1\in\mathbb{Z}$ on $B$ is by multiplication by $n$. Again we are abusing notation by writing $BS(1,n)$ multiplicatively but $B$ additively.

For finding the Scott sentence, we consider the formula $$\phi(a,t)\equiv (\langle a,t\rangle\cong BS(1,n)) \wedge (\forall b \bigwedge\limits_{\gcd(i,n)=1} b^i \neq a).$$
This is not empty because the standard generators $(1,0)$, $(0,1)\in B\rtimes\mathbb{Z}$ satisfy it.

For a tuple $(a,t)$ satisfying $\phi$, the first conjunct guarantees that $a$ is in the base group and $t$, as an element of the original Baumslag--Solitar group $BS(1,n)=B\rtimes\mathbb{Z}$, has the $\mathbb{Z}$ coordinate being $1$ in the semidirect product, because of the inclusion of the formula $tat^{-1} = a^d$. The second conjunct guarantees that the $B$ coordinate of $a$ is $\frac{x}{y}$ for some $x$ and $y$ both dividing some power of $n$. Thus appropriately conjugating $a$ by $t$, we see $\frac{1}{y'} \in \langle a,t\rangle$ for some $y'$, thus $1\in \langle a,t \rangle$, and hence $a$ and $t$ generate the whole group. By the generating set lemma (Lemma \ref{generating set lemma}), we obtain a computable d-$\Sigma_2$ Scott sentence for $BS(1,n)$.

To show completeness, we first observe $BS(1,n)$ is not co-Hopfian. Indeed, let $p$ be a prime not dividing $n$, then consider the endomorphism sending $a$ to $a^p$ and fixing $b$. This is injective but not surjective because, for instance, it misses the element $1$ in $B$. Thus, we construct 
$$G_n\cong
\begin{cases}
\widehat{BS(1,n)}, & n\notin S_1\\
BS(1,n), & n\in S_1\smallsetminus S_2\\
(B^\mathbb{Z})\rtimes(\mathbb{Z}^2), & n\in S_1\cap S_2,
\end{cases}$$
where in $(B^\mathbb{Z})\rtimes(\mathbb{Z}^2)$, $B^\mathbb{Z}$ is the direct sum of countably many copies of $B$, indexed by $\mathbb{Z}$, and the action of the first coordinate of $\mathbb{Z}^2$ is by multiplying by $n$ (to each coordinate), and the action of the second coordinate is by shifting the copies of $B$. The same argument as above will show this construction gives d-$\Sigma_2$ completeness of $I(BS(1,n))$.
\end{proof}

\section{Infinitely-generated free nilpotent groups}

We will now turn our attention to infinitely-generated groups. In this section, we start with a natural continuation of Section 2, showing that the infinitely-generated free nilpotent groups $N_{p,\infty}$ have a computable $\Pi_3$ Scott sentence, and their index sets are $m$-complete $\Pi_3$. We start by stating the following result for $p=1$:

\begin{theorem}[\cite{Ca06}]
The infinitely-generated free abelian group $\mathbb{Z}^\omega$ has a computable $\Pi_3$ Scott sentence. Furthermore, $I(\mathbb{Z}^\omega)$ is $m$-complete $\Pi_3$.
\end{theorem}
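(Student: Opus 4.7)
The plan is to prove the two halves separately: give a computable $\Pi_3$ Scott sentence for $\mathbb{Z}^\omega$, and then show $I(\mathbb{Z}^\omega)$ is $\Pi_3$-hard. The matching $\Pi_3$ upper bound on the index set is immediate from the general proposition relating Scott-sentence complexity to index-set complexity.

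For the Scott sentence I would lean on a Pontryagin-style criterion: a countable torsion-free abelian group is isomorphic to $\mathbb{Z}^\omega$ iff it is not finitely generated and every finite tuple is contained in a finitely generated pure subgroup. The proposed sentence is the conjunction of ``abelian'', ``torsion-free'', ``not finitely generated'', together with
\[
\forall \bar x\, \exists \bar y\,\bigl[\,\bar x \in \langle \bar y \rangle\ \wedge\ \bar y\text{ is }\mathbb{Z}\text{-linearly independent}\ \wedge\ \langle \bar y \rangle\text{ is pure in }G\,\bigr].
\]
The first three conjuncts are at most $\Pi_2$. The key observation that keeps the last conjunct at $\Pi_3$ rather than the naive $\Pi_4$ is that, under the linear-independence assumption on $\bar y$, purity is equivalent to the $\Pi_1$ formula
\[
\forall z\ \bigwedge_{(n,\bar m)\,:\,n\,\nmid\,\gcd(\bar m)} nz \neq \sum_j m_j y_j,
\]
so the bracketed matrix is d-$\Sigma_1$, the $\exists \bar y$ pushes it to $\Sigma_2$, and the outer $\forall \bar x$ to $\Pi_3$. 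For correctness, enumerate $G = \{g_1, g_2, \ldots\}$ and inductively pick finitely generated pure subgroups $P_1 \subseteq P_2 \subseteq \cdots$ containing both the initial segments and the generators of the previous stage; purity in $G$ forces $P_{n+1}/P_n$ to be torsion-free and hence free, so $G$ splits as $P_1 \oplus F_1 \oplus F_2 \oplus \cdots$, and the non-finite-generation clause upgrades the rank to infinite, yielding $G \cong \mathbb{Z}^\omega$.

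For $\Pi_3$-hardness I would reduce from the $\Pi_3$-complete set $\{e : W_e^c \text{ is infinite}\}$, expressed as $\forall n\, C_n(e)$ where $C_n(e)$ is the $\Sigma_2$ statement $\exists m \geq n\,(m \notin W_e)$. Given $e$, construct $G_e = \bigoplus_n H_n$ in stages: each $H_n$ starts as $\langle a_n \rangle \cong \mathbb{Z}$, and at stage $s$ we track the least $m_n(s) \in [n,s]$ with $m_n(s) \notin W_{e,s}$ (if any). Each time $m_n(s)$ changes, adjoin a fresh element $b_n^{(k)}$ with $2 b_n^{(k)} = b_n^{(k-1)}$, taking $b_n^{(0)} = a_n$. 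If $C_n(e)$ holds, $m_n$ stabilizes, only finitely many $b_n^{(k)}$ are added, and $H_n \cong \mathbb{Z}$; if $C_n(e)$ fails, $m_n$ changes infinitely often and $H_n \cong \mathbb{Z}[1/2]$. Hence $G_e \cong \mathbb{Z}^\omega$ iff every $C_n(e)$ holds iff $W_e^c$ is infinite, giving the desired $m$-reduction.

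The main obstacle is the complexity bookkeeping in the Scott sentence: the naive formulation of purity as $\forall z\, \forall n\,(nz \in \langle \bar y \rangle \to z \in \langle \bar y \rangle)$ is $\Pi_2$, which after nesting in $\forall \bar x\, \exists \bar y$ lands at $\Pi_4$. Replacing that inner existential quantifier by a $\Pi_1$ conjunction over the computable set of ``bad'' $(n, \bar m)$ pairs, which is legitimate precisely because linear independence is asserted separately, is what brings the sentence down to $\Pi_3$. After that optimization, the algebraic verification of the Scott sentence and the two-sided analysis of $G_e$ are essentially routine.
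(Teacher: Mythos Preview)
The paper does not actually prove this theorem: it is quoted from \cite{Ca06} as the $p=1$ base case before the paper proves the generalization to $N_{p,\infty}$. So the relevant comparison is between your argument and the paper's proof of the $N_{p,\infty}$ results (the Infinite Generating Set Lemma, its corollary, and the $\Pi_3$-completeness corollary) specialized to $p=1$.

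Your proof is correct but takes a genuinely different route in both halves. For the Scott sentence, you invoke Pontryagin's criterion and write down directly the $\Pi_3$ sentence ``every finite tuple sits inside a finitely generated pure subgroup''; the paper instead proves an Infinite Generating Set Lemma whose Scott sentence asserts that every partial basis (a tuple satisfying $\gamma_k$) extends to a larger one covering any prescribed element. The two are close cousins---your ``independent tuple spanning a pure subgroup'' is, for $p=1$, exactly the paper's $\gamma_k$---but the outer logical skeleton differs: yours is a single $\forall\bar x\,\exists\bar y$ block, whereas the paper's has the back-and-forth flavor $\bigwedge_k \forall\bar x\,[\gamma_k(\bar x)\to \forall y\,\exists\bar x'\,\gamma_l(\bar x')]$. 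Your version is more economical for $\mathbb{Z}^\omega$; the paper's pays for itself by generalizing uniformly to all $N_{p,\infty}$ via Magnus's theorem.

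For $\Pi_3$-hardness you build $G_e=\bigoplus_n H_n$ and make $H_n$ drift from $\mathbb{Z}$ to $\mathbb{Z}[1/2]$ by adjoining halves each time the witness $m_n(s)$ moves (this is fine under the standard convention $W_{e,s}\subseteq[0,s)$, which you should state, so that $m_n(s)$ is always defined once $s\ge n$). The paper instead starts with infinitely many free generators and \emph{collapses} $a_k$ whenever $k$ enters $W_n$, using the residual property to re-embed the finite diagram into a free group of one lower rank; the limit is finitely generated iff $W_n$ is cofinite. Your ``add divisibility'' trick is simpler and self-contained for the abelian case; the paper's ``kill a generator'' trick is what survives the passage to nilpotent groups, where one needs the nilpotent residual property in place of the trivial abelian version.
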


To find a computable Scott sentence for the infinitely-generated free nilpotent group, we give a lemma analogous to the generating set lemma (Lemma \ref{generating set lemma}).

\begin{lemma}[Infinite generating set lemma]\label{infinite generating set lemma}
Suppose $G\cong\pres{a_1, a_2, \dots}{R}$, where $R$ is a normal subgroup of $F_\omega$. Let $R_i$ be $R\cap F_{a_1,\ldots,a_i}\subset F_\omega$. If there are $\langle\gamma_k\rangle_{k\in\omega}$ such that
\begin{enumerate}
\item $\gamma_k(\overline{x})$ implies $\langle \overline{x} \rangle \cong \pres{a_1,a_2,\dots,a_k}{R_k}$ modulo the theory of groups.
\item $G \models \exists x_1\ \gamma_1(x_1)$.
\item $\displaystyle G \models \bigwedge\limits_{k}(\forall x_1,\dots,x_k[\gamma_k(x_1,\dots,x_k)\to\forall y  \bigvee\limits_{l\geq k+1} \exists x_{k+1},\dots,x_l \ \gamma_l(x_1,\dots,x_l)\wedge z\in \langle x_1,\dots,x_l \rangle])$ (``Every $\gamma_k(\overline{x})$ can be `extended' ''. In a countable group, this implies $\overline{x}$ can be extended to a basis.)
\end{enumerate}
Then $\phi$, the conjunction of the group axioms and the sentences (2) and (3), is a Scott sentence of $G$.
\end{lemma}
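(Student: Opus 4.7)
The plan is to show that every countable group $H \models \phi$ is isomorphic to $G$ by building an isomorphism $f \colon H \to G$ in stages, where on the $G$-side I use the fixed presentation generators $a_1, a_2, \ldots$ directly, and on the $H$-side I use tuples produced by the formulas $\gamma_k$. A preliminary group-theoretic observation will do much of the work: for any $k \leq l$, the natural map $\pres{a_1,\ldots,a_k}{R_k} \to \pres{a_1,\ldots,a_l}{R_l}$ sending $a_i \mapsto a_i$ is injective, since if $w \in F_k$ dies in the right-hand group then $w \in R_l \cap F_k = R \cap F_k = R_k$. Applied with $G$ itself playing the role of the right-hand side, this shows that $\langle a_1,\ldots,a_k\rangle \leq G$ is literally isomorphic to $\pres{a_1,\ldots,a_k}{R_k}$, and that these subgroups form a nested chain in $G$ whose union is all of $G$.

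With this in hand, I would proceed by a one-sided back-and-forth: enumerate $H = \{h_0, h_1, \ldots\}$, use condition (2) to obtain $x_1^{(0)} \in H$ satisfying $\gamma_1$, and define $\beta_0 \colon \langle x_1^{(0)}\rangle \to \langle a_1\rangle$ via the isomorphism granted by condition (1). At stage $s+1$, given a tuple $\overline{x}^{(s)}$ of length $k_s$ with $\gamma_{k_s}(\overline{x}^{(s)})$, I apply condition (3) with $y = h_{s+1}$ to obtain an extension $\overline{x}^{(s+1)}$ of some length $k_{s+1} \geq k_s+1$ with $\gamma_{k_{s+1}}(\overline{x}^{(s+1)})$ and $h_{s+1} \in \langle \overline{x}^{(s+1)}\rangle$. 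Condition (1) then provides an isomorphism $\beta_{s+1} \colon \langle \overline{x}^{(s+1)}\rangle \to \langle a_1,\ldots,a_{k_{s+1)}}\rangle$ sending $x_i^{(s+1)} \mapsto a_i$, and the preliminary observation ensures that $\beta_{s+1}$ restricts to $\beta_s$ on the previous subgroup. The union $f = \bigcup_s \beta_s$ is then a well-defined injective homomorphism whose domain contains every $h_n$ (caught at stage $n+1$) and whose image is $\bigcup_s \langle a_1,\ldots,a_{k_s}\rangle = G$, since $k_s \to \infty$.

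The main obstacle I anticipate is the temptation to apply condition (3) symmetrically on both sides, which runs into trouble because the existentially quantified length $l$ in (3) is not under our control: one application in $H$ and one in $G$ can produce tuples of different lengths, and the corresponding presentation groups $\pres{a_1,\ldots,a_l}{R_l}$ for different $l$ are typically not isomorphic. The key observation is that this symmetric approach is unnecessary, because the target group $G$ is already equipped with a canonical chain of subgroups $\langle a_1,\ldots,a_k\rangle$ whose union is $G$, and these automatically realize every $\pres{a_1,\ldots,a_k}{R_k}$. All the matching work is thereby pushed onto condition (1), which is already tailored to deliver exactly the identifications we need.
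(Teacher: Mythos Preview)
Your proposal is correct and takes essentially the same approach as the paper's proof: enumerate $H$, seed with $x_1$ via (2), iteratively apply (3) to extend the tuple so as to absorb each element of $H$, and conclude via (1) that the limiting sequence presents $H$ with exactly the relations $R$, whence $H \cong G$. Your preliminary observation on the injectivity of $\pres{a_1,\ldots,a_k}{R_k} \hookrightarrow \pres{a_1,\ldots,a_l}{R_l}$ and the explicit compatible system $(\beta_s)$ make rigorous what the paper compresses into the single line ``the relation on $x_1,\ldots,x_k$ is exactly $R_k$ by (1)''; the only cosmetic slip is the indexing of which $h_n$ is absorbed at which stage.
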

\begin{proof}
By assumption, $G$ models $\phi$. Let $H$ be a countable group modeling $\phi$. We first choose $x_1$ by (2). Note that (3) allows us to extend any $\overline{x}$ satisfying $\gamma_k$ to generate any element in the group $H$. Thus, we enumerate $H$, and iteratively extend $\overline{x}$ to generate the whole group $H$. If we consider the relations that hold on the infinite limiting sequence $\overline{x}$, the relation on $x_1,\ldots,x_k$ is exactly $R_k$ by (1), thus the group $H = \langle \overline{x} \rangle \cong \pres{\overline{a}}{R} = G$.
\end{proof}

\begin{corollary}
$N_{p,\infty}$ has a computable $\Pi_3$ Scott sentence.
\end{corollary}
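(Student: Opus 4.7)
The plan is to apply the infinite generating set lemma (Lemma \ref{infinite generating set lemma}) with a carefully chosen family $\langle \gamma_k \rangle$. The naive choice, $\gamma_k(\overline{x}) \equiv \langle \overline{x} \rangle \cong N_{p,k}$, is $\Pi_1$ and immediately satisfies conditions (1) and (2), but it fails (3): for instance $x_1 = a_1^2$ freely generates a copy of $N_{p,1}$ inside $N_{p,\infty}$, yet no extension $(a_1^2, x_2, \dots, x_l)$ can simultaneously freely generate $N_{p,l}$ and contain $a_1$ in its subgroup, because the obstruction persists already in the abelianization $\mathbb{Z}^\omega$.

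To repair this, I would strengthen $\gamma_k(\overline{x})$ with a primitivity clause asserting that the image of $\overline{x}$ in the abelianization is a partial basis of $\mathbb{Z}^\omega$. Adapting the matrix-based technique from the abelian base case of Theorem \ref{polycyclic scott sentence}, primitivity modulo $G' = [G,G]$ is captured by
\[
\forall \overline{z} \bigwedge_{|\det M| > 1} M\overline{z} \ \hat{\neq}\ \overline{x},
\]
where $M$ ranges over $k \times k$ integer matrices and $\hat{\neq}$ denotes inequality modulo $G'$. Since membership in $G'$ is $\Sigma_1$, the predicate $\hat{\neq}$ is $\Pi_1$, so $\gamma_k$ remains computable and $\Pi_1$.

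Condition (1) holds by the atomic diagram clause, and (2) by $\gamma_1(a_1)$. For (3), given $\overline{x}$ satisfying $\gamma_k$ and any $y \in N_{p,\infty}$, the primitivity clause lets us extend the images of $x_1, \dots, x_k$ in the abelianization to a basis of $\mathbb{Z}^\omega$; I then invoke the standard fact that a sequence in $N_{p,\infty}$ whose images form a basis of $\mathbb{Z}^\omega$ is itself a basis of $N_{p,\infty}$ to lift this extension to an infinite basis $(x_1, x_2, \dots)$ of $N_{p,\infty}$. Truncating at $l$ large enough to contain $y$ in the subgroup produces the desired witness satisfying $\gamma_l$.

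A routine complexity count then yields the $\Pi_3$ bound: (2) is $\Sigma_2$, while (3) unfolds as $\bigwedge_k \forall \overline{x}[\Pi_1 \to \forall y \bigvee_l \exists \overline{x'}(\Pi_1 \wedge \Sigma_1)]$ and collapses to $\Pi_3$; conjoining with the $\Pi_1$ group axioms keeps the sentence $\Pi_3$. The main obstacle is the group-theoretic content of (3) --- the equivalence between abelianization primitivity and partial-basis status in $N_{p,\infty}$ --- which requires a lifting lemma for free nilpotent groups (e.g., that a surjective endomorphism of $N_{p,\infty}$ inducing an isomorphism on the abelianization is itself an isomorphism).
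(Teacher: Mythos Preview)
Your proposal is correct and matches the paper's proof almost verbatim: the paper defines $\gamma_k(\overline{y}) \equiv (\langle \overline{y}\rangle \cong N_{p,k}) \wedge (\forall \overline{z}\bigwedge_{\det M \neq \pm 1} M\overline{z}\ \hat{\neq}\ \overline{y})$, verifies extendibility by passing to the abelianization (via Smith normal form) and then lifting with exactly the theorem of Magnus you anticipated, and finishes with the same $\Pi_3$ complexity count. The only cosmetic difference is your determinant condition $|\det M|>1$ versus the paper's $\det M \neq \pm 1$, which are equivalent in the presence of the first conjunct since the images of $\overline{x}$ in $\mathbb{Z}^\omega$ are already $\mathbb{Z}$-independent.
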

\begin{proof}
Let $\gamma_k(\overline{y}) \equiv (\langle \overline{y} \rangle \cong N_{p,k})  \wedge (\forall \overline{z} \bigwedge\limits_{\det(M) \neq \pm 1} M\overline{z} \ \hat\neq\  \overline{y})$, where in the second conjunct the inequality is relativized (as in the polycyclic case, Theorem \ref{polycyclic scott sentence}) to the abelianization $H/H'$ and we are abusing notation and thinking of the abelianization additively. So for $M\overline{z}$ we mean matrix multiplication, thinking of each $z_i$ as a row vector.

To show the $\gamma_k$'s satisfy the extendibility condition, fix $\overline{x}$ satisfying $\gamma_k$. Working in the abelianization, after truncating the columns in which $\overline{x}$ has no nonzero entries	, we write $\overline{x}$ in its Smith normal form, i.e.~find invertible $k\times k$ and $n \times n$ matrices $S$ and $T$ such that $S\overline{x}T$ has all but the $(i,i)$-th entries being zero. The second conjunct of $\gamma_k$ guarantees that all the $(i,i)$-th entries are actually 1. Thus, $\overline{x}$ can be extended to a basis of $\mathbb{Z}^\infty = \operatorname{ab}(N_{p,\infty})$. By a theorem of Magnus (\cite[Lemma 5.9]{Ma04}), this implies that $\overline{x}$ can be extended to a basis of $N_{p,\infty}$, thus satisfying the extendibility condition.

The $\gamma_k$'s are $\Pi_1$, and a direct counting shows that the Scott sentence we obtain from the previous lemma is $\Pi_3$.
\end{proof}

For completeness of the index set of $N_{p,\infty}$, we generalize the technique from the abelian case, but we will need the following group-theoretic lemma.

\begin{lemma}[Nilpotent residual property]\label{nilpotent residual property}
For $n,m\geq p$, $N_{p,n}$ is fully residually-$N_{p,m}$. I.e., for every finite subset $S\subset N_{p,n}$, there exists a homomorphism $\phi: N_{p,n}\to N_{p,m}$ such that $\phi$ is injective on $S$.
\end{lemma}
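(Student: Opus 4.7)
The plan is to split into two cases based on whether $n \le m$ or $n > m$. When $n \le m$, the universal property of free nilpotent groups gives an embedding $N_{p,n} \hookrightarrow N_{p,m}$ via $a_i \mapsto b_i$, and this single homomorphism is injective on all of $N_{p,n}$, in particular on any finite $S$. So the substantive case is $n > m \ge p$.

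For $n > m$, I would argue by a generic specialization (Zariski-density) on the space of homomorphisms $\mathrm{Hom}(N_{p,n}, N_{p,m})$. By the freeness of $N_{p,n}$ in the variety of nilpotent groups of class $\le p$, every tuple $\overline{y} = (y_1, \ldots, y_n) \in N_{p,m}^n$ extends uniquely to a homomorphism $\phi_{\overline{y}}$ with $\phi_{\overline{y}}(a_i) = y_i$, so the parameter space of homomorphisms is identified with $N_{p,m}^n$. Fixing Mal'cev bases of both groups, the Hall--Petresco collection process shows that the Mal'cev coordinates of $\phi_{\overline{y}}(g)$ are polynomials in the Mal'cev coordinates of the $y_i$'s. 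Consequently, for each $g \ne 1$ in $S$ the ``bad locus'' $V_g = \{\overline{y} : \phi_{\overline{y}}(g) = 1\}$ is a Zariski-closed subset of the affine space $\mathbb{Z}^{nN}$, where $N$ is the number of basic commutators of weight $\le p$ on $m$ generators.

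The main obstacle is the key step of showing that each $V_g$ is a \emph{proper} subset of the ambient affine space --- equivalently, that $N_{p,n}$ is already residually $N_{p,m}$ (one $g$ at a time). This is where the hypothesis $m \ge p$ is essential: with at least $p$ generators, the free nilpotent Lie algebra $\mathfrak{n}_{p,m}$ admits nontrivial left-normed commutators of every length up to $p$, and more generally it satisfies no nontrivial multilinear identity of degree $\le p$. Hence the polynomial map $\overline{y} \mapsto \phi_{\overline{y}}(g)$ is not identically $1$ for any nonzero $g$, so $V_g$ is a proper Zariski-closed subset. Once this residual property is in hand, the finite union $\bigcup_{g \in S \setminus \{1\}} V_g$ is still proper in the irreducible affine space $\mathbb{Z}^{nN}$, so we may choose an integer point $\overline{y}$ outside it, and the resulting $\phi_{\overline{y}}$ is the desired homomorphism injective on $S$.
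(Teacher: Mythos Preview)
Your approach is correct in outline but genuinely different from the paper's. The paper gives a two-line proof by combining two cited results: Baumslag--Myasnikov--Remeslennikov \cite{Ba99} show that any group universally equivalent to $N_{p,m}$ is fully residually-$N_{p,m}$, and Timoshenko \cite{Ti00} shows that $N_{p,n}$ and $N_{p,m}$ are universally equivalent whenever $n,m\ge p$. Your argument, by contrast, unpacks the mechanism directly: parametrize $\operatorname{Hom}(N_{p,n},N_{p,m})$ as an affine space via Mal'cev coordinates, show each bad locus $V_g$ is a \emph{proper} Zariski-closed set, and then use irreducibility of affine space to avoid a finite union. This is essentially the geometric content behind the BMR implication, specialized to free nilpotent groups; it is more hands-on and self-contained, while the paper's route is shorter and immediately gives the stronger statement for any group universally equivalent to $N_{p,m}$.

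Two points in your write-up deserve tightening. First, ``injective on $S$'' is not the same as ``$\phi(g)\neq 1$ for $g\in S\setminus\{1\}$''; you should take the finite union of the loci $V_{gh^{-1}}$ over all distinct pairs $g,h\in S$, not just $V_g$ for $g\in S$. This is a routine fix. Second, the word ``hence'' in your key step is carrying real weight: the passage from ``$\mathfrak{n}_{p,m}$ has no nontrivial multilinear Lie identity of degree $\le p$'' to ``$\phi_{\overline y}(g)$ is not identically $1$'' requires (i) linearization (over $\mathbb{Q}$, every homogeneous Lie identity of degree $d$ yields a multilinear one of degree $d$), and (ii) the correspondence between group-word identities in $N_{p,m}$ and Lie-polynomial identities in $\mathfrak{n}_{p,m}$ via the associated graded or Lazard correspondence, applied to the lowest-weight component of $g$. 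Both steps are standard, but as written your argument does not make them visible, and without them the implication is not justified. Once these are filled in, your proof is complete.
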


\begin{proof}
Baumslag, Myasnikov, and Remeslennikov in \cite{Ba99} showed that any group universally equivalent to (i.e., having the same first-order universal theory as) a free nilpotent group $N_{p,m}$ is fully residually-$N_{p,m}$. Timoshenko in \cite{Ti00} showed that $N_{p,n}$ and $N_{p,m}$ are universally equivalent for $n,m \geq p$. Combining these two results, we prove the desired lemma.

This is a combination of the result of Baumslag, Myasnikov, and Remeslennikov \cite{Ba99} that any group universally equivalent to (i.e., having the same first-order universal theory as) 
a free nilpotent group $N_{p,m}$ is fully residually-$N_{p,m}$, and the result of Timoshenko \cite{Ti00} that $N_{p,n}$ and $N_{p,m}$ are universally equivalent for $n,m \geq p$.
\end{proof}

\begin{corollary}
$I(N_{p,\infty})$ is $m$-complete $\Pi_3$ for every $p$.
\end{corollary}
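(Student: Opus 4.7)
The $\Pi_3$ upper bound is immediate from the preceding corollary (which exhibits a computable $\Pi_3$ Scott sentence for $N_{p,\infty}$), so I would focus on $\Pi_3$-hardness. The plan is to adapt the strategy used for the abelian case $\mathbb{Z}^\omega = N_{1,\infty}$ in \cite{Ca06}: fix a $\Pi_3$-complete set $S$ and, uniformly in $n$, construct a computable group $G_n$ with $G_n \cong N_{p,\infty}$ iff $n \in S$.

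I would write $n \in S \iff \forall k\, \sigma_{n,k}$ with each $\sigma_{n,k}$ a $\Sigma_2$ predicate, and fix a standard nondecreasing computable approximation $l_{k,s}$ such that $l_{k,s}$ stabilizes iff $\sigma_{n,k}$ holds. I would then build $G_n$ stage by stage as a class-$p$ nilpotent group on generators $\{c_{k,i} : k,i \in \omega\}$, with a base generator $c_{k,0}$ in each column. At stage $s$, whenever $l_{k,s}$ increases to a new value $i$, I introduce the new generator $c_{k,i}$ together with the single relation $c_{k,i}^p = c_{k,i-1}$. After eliminating the lower column elements by Tietze moves, each finite-stage group $G_n^s$ is a finite-rank free nilpotent group of class $p$, and the natural transition $G_n^s \hookrightarrow G_n^{s+1}$ sends the current top of each column either to itself or to the $p$-th power of the new top.

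For the verification, if $n \in S$, each column stabilizes at a finite length $L_k$, and the Tietze-reduced limit $G_n$ is free nilpotent on the stabilized tops $\{c_{k,L_k} : k \in \omega\}$, giving $G_n \cong N_{p,\infty}$. If $n \notin S$, some column has $l_{k,s}\to\infty$, yielding an infinite chain of relations $c_{k,i}^p = c_{k,i-1}$ so that $c_{k,0} = c_{k,i}^{p^i}$ for every $i$. The image of $c_{k,0}$ in the abelianization $G_n/G_n'$ is thus a nontrivial element divisible by every power of $p$; since $N_{p,\infty}/(N_{p,\infty})' \cong \mathbb{Z}^\omega$ contains no such element, $G_n \not\cong N_{p,\infty}$.

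The main obstacle I anticipate is verifying that the transitions $G_n^s \hookrightarrow G_n^{s+1}$ are honest injective embeddings of free nilpotent groups, i.e., that the subgroup of $N_{p,m}$ generated by sending some free basis elements to $p$-th powers is again free nilpotent of rank $m$; without this, $G_n^s$ could unexpectedly collapse later and the good/bad dichotomy would be lost in the limit. This is where the nilpotent residual property (Lemma \ref{nilpotent residual property}) is essential: it certifies enough separation of elements by maps to finite-rank free nilpotent groups to guarantee both the injectivity of these transitions and the decidability of the atomic diagram of $G_n$. Once this bookkeeping is in place, the rest of the verification that $G_n \cong N_{p,\infty}$ iff $n \in S$ follows the same pattern as the abelian construction in \cite{Ca06}.
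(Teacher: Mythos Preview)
Your construction is sound and does prove $\Pi_3$-hardness, but it takes a different route from the paper's, and your appeal to Lemma~\ref{nilpotent residual property} is misplaced.

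The paper reduces from the complement of $\COF$ and goes in the opposite direction: it begins with an infinite free generating set $\{a_i\}$ (together with $p$ extra generators to keep the rank at least $p$) and, each time some $k$ enters $W_n$, it \emph{collapses} the generator $a_k$ by embedding the finite part built so far into a free nilpotent group of one lower rank. The nilpotent residual property is exactly what licenses that collapse: it guarantees a homomorphism $N_{p,m}\to N_{p,m-1}$ that is injective on the finitely many elements already committed to the diagram. In the limit, $n\in\COF$ gives a finite-rank free nilpotent group and $n\notin\COF$ gives $N_{p,\infty}$.

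Your approach instead \emph{expands} each column by adjoining $p$-th roots. The key fact you need is not Lemma~\ref{nilpotent residual property} at all, but rather that the endomorphism of $N_{p,m}$ sending one basic generator to its $p$-th power (and fixing the others) is injective; this is what makes each transition $G_n^s\hookrightarrow G_n^{s+1}$ an honest embedding of free nilpotent groups and hence makes the diagram of $G_n$ decidable. That fact is true and elementary --- commutators of weight $k$ are multilinear modulo $G_{k+1}$, so the induced map on each lower-central quotient multiplies basic commutators by a positive power of $p$, and torsion-freeness of the quotients gives injectivity on each layer and hence on the whole group --- but it is not a consequence of the residual property, which concerns maps \emph{decreasing} the rank. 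Once you replace your citation of Lemma~\ref{nilpotent residual property} by this argument, your verification goes through: stabilized columns give $N_{p,\infty}$, while an unbounded column forces an infinitely $p$-divisible nontrivial element in the abelianization. One small advantage of your route is that it avoids the Baumslag--Myasnikov--Remeslennikov/Timoshenko machinery behind Lemma~\ref{nilpotent residual property}; one small advantage of the paper's route is that the alternative outcome is still a free nilpotent group, which gives completeness \emph{within} that class.
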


\begin{proof}
Recall that $\operatorname{COF}$, the index set of all cofinite c.e.~sets, is m-complete $\Sigma_3$. We will reduce the complement of $\operatorname{COF}$ to $I(N_{p,\infty})$.

We construct $G_n$ uniformly in $n$. We first fix an infinite set of generators $a_0, a_1, \dots$ and $p$ distinguished generators $b_0,b_1,\dots, b_p$ distinct from the $a_i$'s, and we start the construction by constructing $\overline{b}$. At a finite stage, if we see some natural number $k$ being enumerated into $W_n$, we collapse $a_k$ by taking the subgroup $N_{p,m}\subset N_{p,\infty}$ generated by all the generators that have been mentioned so far. Having the $\overline{b}$ means $m> p$, so by the nilpotent residual property (Lemma \ref{nilpotent residual property}), we can embed what we have constructed so far into $N_{p,m-1}$ with the same generators except $a_k$. 

Thus, in the limit, if $n\in \COF$, then we will collapse all but finitely many $a_i$'s, hence the limiting group will be a finitely-generated (free) nilpotent group not isomorphic to $N_{p,\infty}$; and if $n\notin \COF$, we will still have infinitely many of the $a_i$'s, and the limiting group will be isomorphic to $N_{p,\infty}$. This shows $I(N_{p,\infty})$ is $m$-complete $\Pi_3$.
\end{proof}

\begin{remark}
In the corollary, one actually can prove that $I(N_{p,\infty})$ is $m$-complete $\Pi_3$ within the class of free nilpotent groups. The interested reader can compare this to the same result for infinitely-generated free abelian groups in \cite{Ca06}.
\end{remark}

\section{A subgroup of $\mathbb{Q}$}

In this section, we will look at a special subgroup of $\mathbb{Q}$. Knight and Saraph \cite[\S 3]{Kn} considered subgroups of $\mathbb{Q}$, distinguishing between cases by looking at the following invariants.

\begin{definition}
We write $P$ to be the set of primes. Let $G$ be a computable subgroup of $\mathbb{Q}$. Without loss of generality, we will assume $1\in G$, otherwise we can take a subgroup of $\mathbb{Q}$ isomorphic to $G$ containing $1$. We define:
\begin{enumerate}
\item $\displaystyle P^0(G) = \{ p\in P : G \models p \nmid 1 \}$
\item $\displaystyle P^{\text{fin}}(G) = \{ p\in P : G\models p|1\text{ and }p^k\nmid 1\text{ for some }k \}$
\item $\displaystyle P^{\infty}(G)=\{ p\in P: G\models p^k|1 \text{ for all }k\}$
\end{enumerate}
\end{definition}

\begin{remark}
\begin{enumerate}
\item Define $\displaystyle P^{k}(G) = \{ p\in P : G\models p^k|1 \text{ and } p^{k+1}\nmid 1\}$. Then two subgroups $G$, $H$ of $\mathbb{Q}$ are isomorphic if and only if $P^k(G)=^*P^k(H)$ for every $k$, with equalities holding on cofinitely many of $k$, and $P^\infty(G)=P^\infty(H)$. $S=^*T$ means $S$ and $T$ only differ by finitely many elements.
\item Since $G$ is computable, $P^0$ is $\Pi_1$, $P^\text{fin} \cup P^\infty$ is $\Sigma_1$, $P^\text{fin}$ is $\Sigma_2$, and $P^\infty$ is $\Pi_2$.
\end{enumerate}
\end{remark}

Dividing the subgroups of $\mathbb{Q}$ into cases by these invariants, Knight and Saraph determined the upper and lower bound of complexities of Scott sentences and the index sets in some cases. The case we consider here is when $P^0$ is infinite, $P^\text{fin}$ is finite (and thus, without loss of generality, empty), and $P^\infty$ is infinite. This is case 5 in \cite{Kn}, and they have the following results:

\begin{theorem}[\cite{Kn}]
Let $G$ be a computable subgroup of $\mathbb{Q}$ with $|P^0|=\infty$, $P^\text{fin}=\emptyset$, and $|P^\infty|=\infty$. Then
\begin{enumerate}
\item $G$ has a computable $\Sigma_3$ Scott sentence.
\item $I(G)$ is d-$\Sigma_2$-hard.
\item If $P^\infty$ is low, then $I(G)$ is d-$\Sigma_2$.
\item If $P^\infty$ is not high$_2$, then $I(G)$ is not $m$-complete $\Sigma_3$.
\end{enumerate}
\end{theorem}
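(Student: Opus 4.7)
The plan is to prove the four parts largely independently, with the Scott sentence from (1) providing the backbone for the upper bounds in (3) and (4).

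Part (1). I would write the Scott sentence as
$$\phi \;:=\; (\text{TF ab of rank }1)\;\wedge\;\exists h\bigl(h\neq 0\wedge\alpha(h)\wedge\gamma(h)\bigr),$$
where $\alpha(h):=\bigwedge_{p\in P^\infty(G)}\forall k\,\exists g\,(p^k g = h)$ and $\gamma(h):=\bigwedge_{p\in P}\bigl[\bigvee_s \chi_{p,s}\vee\forall g\,(pg\neq h)\bigr]$, with $\chi_{p,s}$ the decidable constant ``$p$ has been enumerated into $P^\infty(G)$ by stage $s$.'' The crucial observation is that $P^\text{fin}(G)=\emptyset$ collapses $P^\infty(G)$ to the c.e.\ set $\{p:p\mid 1\text{ in }G\}$, making $\alpha$ a computable $\Pi_2$ formula; the ``wait-for-enumeration'' trick in $\gamma$ then folds what would naively be a conjunction over the co-c.e.\ set $P^0(G)$ into a computable $\Pi_2$ formula. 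A short check verifies $\alpha\wedge\gamma$ pins the divisibility type of $h$ down to exactly that of $1\in G$, so $H\cong G$; the overall sentence is $\Sigma_3$.

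Part (2). The plan is to reduce an arbitrary d-$\Sigma_2$ set $S=S_1\smallsetminus S_2$, with $\Sigma_2$-approximations $S_{1,s},S_{2,s}$, to $I(G)$. For each $n$ I reserve two primes $p_n,q_n\in P^0(G)$ and build $G_n\leq\mathbb{Q}$ stage by stage: all primes other than $p_n,q_n$ are kept in sync with $G$, while at stage $s$ the identity is divided once more by $p_n$ iff $n\notin S_{1,s}$ and by $q_n$ iff $n\in S_{2,s}$. The limiting $G_n$ is isomorphic to $G$ exactly when neither $p_n$ nor $q_n$ is ever divided, which happens precisely when $n\in S$. The main obstacle is that divisibilities cannot be undone, so one must argue that any transient division by $p_n$ already destroys the isomorphism by placing $p_n$ into $P^\text{fin}(G_n)$ or $P^\infty(G_n)$; this is exactly where the hypothesis $P^\text{fin}(G)=\emptyset$ is used.

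Parts (3) and (4). For (3), lowness of $P^\infty$ gives $P^\infty\in\Delta_2$, so by the Limit Lemma there is a uniformly computable approximation $(P^\infty)_s\to P^\infty$; substituting this for the c.e.\ stage constants $\chi_{p,s}$ in (1) collapses the inner $\bigvee_s\chi_{p,s}$ in $\gamma$ to a $\Delta_2$ predicate, and re-counting quantifiers turns the $\Sigma_3$ Scott sentence into a d-$\Sigma_2$ one. Combined with (2), $I(G)$ is then $m$-complete d-$\Sigma_2$. For (4), the same construction relativized to $P^\infty$ shows the Scott sentence is d-$\Sigma_2^{P^\infty}$, so $I(G)\leq_T (P^\infty(G))''$; if $I(G)$ were $m$-complete $\Sigma_3$ then $\emptyset'''\leq_m I(G)\leq_T (P^\infty)''$, forcing $P^\infty$ to be high$_2$, contrary to assumption. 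The most delicate step in both parts is checking that the sentence-level collapse really yields a $(P^\infty)''$-Turing reduction on index sets, uniformly in the presentation --- this is where the precise interplay between the $\exists h$ quantifier and the $\Delta_2$-in-$P^\infty$ structure of the divisibility conditions has to be tracked carefully.
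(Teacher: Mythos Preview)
The paper does not prove this theorem; it is quoted from \cite{Kn} as background. So there is no ``paper's own proof'' to compare against, and I can only assess your sketch on its merits.

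Parts (1) and (4) are essentially sound. In Part (2) your construction is correct but your justification is not: you write that $G_n\cong G$ ``exactly when neither $p_n$ nor $q_n$ is ever divided.'' That is false. A single (or any finite number of) divisions by $p_n$ only moves $p_n$ into $P^{\text{fin}}(G_n)$, and since isomorphism of rank-$1$ groups depends only on the height sequence up to finite modification, this does \emph{not} destroy $G_n\cong G$. The correct statement is that $G_n\cong G$ iff neither $p_n$ nor $q_n$ is divided \emph{infinitely often}, which is exactly the $\Sigma_2$ condition you want. The hypothesis $P^{\text{fin}}(G)=\emptyset$ is not what rescues you here; the invariance under finite change is.

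Part (3) has a genuine gap. First, ``lowness of $P^\infty$ gives $P^\infty\in\Delta_2$'' is vacuous: $P^\infty$ is c.e., hence already $\Delta_2$, so your substitution of a $\Delta_2$ approximation for the enumeration constants $\chi_{p,s}$ uses nothing about lowness and cannot lower the complexity. Lowness is a statement about the \emph{jump} of $P^\infty$, namely $(P^\infty)'\le_T\emptyset'$, and any correct argument must exploit that. Second, and more seriously, your plan concludes by producing a computable d-$\Sigma_2$ Scott sentence for $G$. This is provably impossible: the very next theorem in the paper (from \cite{Kn14}) shows that whenever $P^\infty$ is non-computable---in particular when it is low but non-computable---$G$ has \emph{no} computable d-$\Sigma_2$ Scott sentence, and Lemma~\ref{lower bound lemma} rules out even a d-$\Sigma_2$ pseudo-Scott sentence. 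Indeed, this case is precisely the paper's advertised counterexample to the ``index-set complexity equals optimal-Scott-sentence complexity'' thesis. So for (3) you must argue directly that $I(G)$ is d-$\Sigma_2$ as a set of indices, without passing through any sentence; the route you propose cannot succeed.
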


It was not known that if there is a subgroup of $\mathbb{Q}$ as in the theorem that has $m$-complete $\Sigma_3$ index set, thus achieving the upper bound in (1). In Proposition \ref{halting set}, we shall give such an example.

Also, the following theorem shows that such a group does not have a computable d-$\Sigma_2$ Scott sentence unless $P^0$ is computable.

\begin{theorem}[\cite{Kn14}]
Let $G$ be a computable subgroup of $\mathbb{Q}$ with $|P^0|=\infty$, $P^\text{fin}=\emptyset$, and $|P^\infty|=\infty$, and suppose $P^\infty$ is not computable. Then $G$ does not have a computable d-$\Sigma_2$ Scott sentence.
\end{theorem}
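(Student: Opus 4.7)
The strategy is to prove the contrapositive: assume $\phi = \alpha \wedge \beta$ is a computable d-$\Sigma_2$ Scott sentence with $\alpha$ computable $\Sigma_2$ and $\beta$ computable $\Pi_2$, and deduce that $P^\infty$ is computable. Since $P^{\text{fin}}=\emptyset$, the sets $P^0$ and $P^\infty$ partition the primes, so it suffices to show that each is c.e.

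First I would extract a c.e.~set $A \subseteq P^0$ from $\alpha$ and a c.e.~set $B \subseteq P^\infty$ from $\beta$. Fix a witness tuple $\overline{a} \in G$ and a $\Pi_1$ disjunct $\pi(\overline{x})$ of $\alpha$ with $G \models \pi(\overline{a})$; after scaling we may take $\overline{a}$ to be a nonzero integer tuple. Let $A$ be the c.e.~set of primes $p$ for which $\pi$ contains a clause of the form $\forall y\,(py \neq w(\overline{x}))$ with $G \models w(\overline{a}) \neq 0$: if $p$ were in $P^\infty$, then $w(\overline{a})$ would be $p$-divisible in $G$, contradicting the clause, so $A \subseteq P^0$. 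Dually, let $B$ be the c.e.~set of primes $p$ for which $\beta$ contains the conjunct $\forall x \exists y\,(py=x)$; clearly $B \subseteq P^\infty$.

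Next I would prove $A = P^0$ and $B = P^\infty$ via a twin-group construction. Suppose some $p_0 \in P^\infty \setminus B$, and let $G' \subsetneq G$ be the subgroup of $\mathbb{Q}$ agreeing with $G$ at every prime except $p_0 \in P^0(G')$. Since $G' \not\cong G$, the Scott property forces $G' \not\models \phi$, so one aims for the opposite conclusion. Indeed $\pi(\overline{a})$ persists downward to $G' \subseteq G$ because $\pi$ is $\Pi_1$, hence $G' \models \alpha$; and $\beta$ persists because none of its conjuncts forces $p_0 \in P^\infty$ (as $p_0 \notin B$). This contradiction gives $B = P^\infty$. For $P^0 \subseteq A$, take $p_0 \in P^0 \setminus A$ and enlarge $G$ to $G' \supsetneq G$ with $p_0 \in P^\infty(G')$; here $\beta$ transfers upward because adding $p_0$-divisibility preserves each basic ``$p \in P^\infty$'' conjunct of $B$, while $\pi(\overline{a})$ transfers because no clause of $\pi$ mentions $p_0$-divisibility of any $w(\overline{a})$. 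Both $P^0$ and $P^\infty$ are then c.e.~and hence computable, contradicting the hypothesis.

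The hard part will be justifying the crucial step ``$\beta$ has no conjunct forcing $p_0 \in P^\infty$ beyond those in $B$'' (and dually for $\alpha, A$). A general computable $\Pi_2$ conjunct $\forall \overline{y}\exists \overline{z}\,\theta(\overline{y},\overline{z})$ could in principle encode divisibility constraints subtler than the basic form $\forall x\exists y\,(py=x)$. I expect the key technical input to be a normal-form lemma for subgroups of $\mathbb{Q}$ with $P^{\text{fin}}=\emptyset$: every computable $\Pi_2$ sentence true in such a group is equivalent, over this class, to a c.e.~conjunction of basic ``$p\in P^\infty$'' clauses, and every computable $\Pi_1$ formula in an integer tuple is equivalent to a c.e.~conjunction of basic ``$p\nmid w(\overline{a})$'' clauses. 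Establishing these normal forms — presumably by classifying the existential and universal types of tuples in such subgroups — is the bulk of the work, after which the twin-group argument closes the theorem.
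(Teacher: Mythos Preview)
The theorem you are attempting is not proved in this paper; it is quoted from \cite{Kn14}. The paper does, however, prove a strengthening (Lemma~\ref{lower bound lemma}, for pseudo-Scott sentences), and that proof---together with the lemmas it cites from \cite{Kn14}---is what your attempt should be compared against.

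Your overall architecture (contrapositive; extract c.e.\ information about $P^0$ and $P^\infty$ from the two halves of the sentence; use a ``twin group'' to force a contradiction) is the right shape, but the proposed normal-form lemma for the $\Pi_2$ half is false, and this is a genuine gap rather than a missing detail. Over the class of rank-one torsion-free abelian groups with $P^{\text{fin}}=\emptyset$, consider the computable $\Pi_2$ sentence
\[
\forall x\,\exists y\,(2y=x \vee 3y=x).
\]
On this class it is equivalent to ``$2\in P^\infty$ or $3\in P^\infty$'', a disjunction that cannot be rewritten as a c.e.\ conjunction of basic clauses $\forall x\,\exists y\,(py=x)$. Concretely, if $G$ has $2\in P^\infty(G)$, $3\notin P^\infty(G)$, and $\beta$ contains this disjunctive conjunct but not the literal conjunct $\forall x\,\exists y\,(2y=x)$, then your set $B$ misses $2$, yet removing $2$ to form $G'$ makes $G'\not\models\beta$; your twin-group contradiction never fires, and you cannot conclude $B=P^\infty$.

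What \cite{Kn14} actually supplies in place of your $\Pi_2$ normal form is a \emph{transfer} lemma: any computable $\Pi_2$ sentence satisfied by $G$ is already implied by the single sentence $\alpha$ asserting ``rank-one torsion-free abelian with $P^\infty\supseteq X$'' (where $X=P^\infty(G)$). This lets one replace the $\Pi_2$ half outright by $\alpha$, so the twin group (obtained by \emph{enlarging} $P^\infty$) automatically satisfies it. For the $\Sigma_2$ half $\psi=\exists x\,\bigwedge_i\rho_i(x)$, the paper does not attempt a syntactic normal form either: it builds the c.e.\ first-order theory $T$ consisting of the torsion-free abelian axioms, full $p$-divisibility for $p\in X$, and the conjuncts $\rho_i(c)$, and shows by a compactness/substructure argument that for every $i\notin X$ there is $k$ with $T\vdash p_i^k\nmid c$. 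Searching for such proofs enumerates the complement of $X$. Note also that you do unnecessary work: since $P^{\text{fin}}=\emptyset$, the set $P^\infty$ is already $\Sigma_1$, so only $P^0$ needs to be shown c.e.; the whole burden falls on the $\Sigma_2$ half, which is exactly where the compactness argument lives.
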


Thus, when $P^\infty$ is low but not computable, this gives a negative answer to the conjecture that the complexity of the index set should equal to the complexity an optimal Scott sentence. Continuing in this direction, we give an example of a subgroup in this case where the two complexities do equal each other, and are both $\Sigma_3$.

\begin{proposition}\label{halting set}
Let $K$ be the halting set. Let $G \subseteq \mathbb{Q}$ be a subgroup such that $1 \in G$, $P^\infty(G)=\{ p_n\in P \mid n\in K\}$, and $P^\text{fin}(G)=\emptyset$. Then $I(G)$ is $m$-complete $\Sigma_3$. 
\end{proposition}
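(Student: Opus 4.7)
The plan is to verify both $I(G)\in\Sigma_3$ and $\Sigma_3$-hardness of $I(G)$. The upper bound is immediate: our $G$ satisfies $|P^0(G)|=\infty$, $P^{\mathrm{fin}}(G)=\emptyset$, and $|P^\infty(G)|=\infty$, so the theorem of Knight and Saraph recalled just above gives $G$ a computable $\Sigma_3$ Scott sentence, whence $I(G)\in\Sigma_3$.

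For hardness I would reduce $\COF=\{n:W_n\text{ is cofinite}\}$, the canonical $m$-complete $\Sigma_3$ set, to $I(G)$. Given $n$, I would uniformly construct a computable subgroup $H_n\subseteq\mathbb{Q}$ with $1\in H_n$ satisfying $H_n\cong G$ iff $n\in\COF$. By the isomorphism criterion for subgroups of $\mathbb{Q}$ stated in the remark following Definition~5.1, this reduces to arranging (a) $P^\infty(H_n)=P^\infty(G)=\{p_m:m\in K\}$ and (b) $P^{\mathrm{fin}}(H_n)$ finite iff $n\in\COF$, where I would make (a) hold for every $n$ by construction so the whole burden falls on (b).

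The construction proceeds in stages. At stage $s$ I would first enumerate $1/p_m^k$ for all $k\leq s$ into $H_n$ whenever $m\in K_s$, which forces $p_m\in P^\infty(H_n)$ precisely when $m\in K$. Second, letting $m_x^{(s)}$ denote the $(x+1)$-th smallest element of $\mathbb{N}\setminus K_s$---a non-decreasing computable approximation to the $(x+1)$-th element $m_x^*$ of the coinfinite $\Pi_1$ set $\mathbb{N}\setminus K$---for each $x\leq s$ with $x\notin W_{n,s}$ I would add only the first power $1/p_{m_x^{(s)}}$. Only first powers are ever added in the second step, so (a) is controlled by the first step alone. The key observation is that whenever $m_x^{(s)}$ strictly increases between stages, the discarded value must have just entered $K$ (either it was $m_x^{(s-1)}$ itself, or a smaller element whose entry shifted ranks), so it is absorbed into $P^\infty$ by the first step rather than lingering in $P^{\mathrm{fin}}$; thus only the stabilized values $m_x^*$ can contribute to $P^{\mathrm{fin}}(H_n)$.

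The hard part is establishing (b)---that in the limit $p_{m_x^*}\in P^{\mathrm{fin}}(H_n)$ exactly when $x\notin W_n$. The direction ``$x\notin W_n\Rightarrow$ contribution'' is clear, since then $1/p_{m_x^*}$ is repeatedly added after the approximation stabilizes. The reverse direction is delicate: for $x\in W_n$ with entry time $\tau(x)$ that exceeds the stabilization stage of $m_x^{(s)}$, the addition of $1/p_{m_x^*}$ happens before $x$ enters $W_n$, spuriously leaving $p_{m_x^*}$ in $P^{\mathrm{fin}}(H_n)$. I would handle this by refining the second step with a stability-based delay governed by a finite-injury priority argument: for each $x$, commit an addition only after the approximation has been stable for an $x$-dependent number of stages, with the delay calibrated so that commitments and $W_n$-entries interleave consistently in the limit. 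With this refinement, $|P^{\mathrm{fin}}(H_n)|=|\{x:x\notin W_n\}|$, which is finite iff $W_n$ is cofinite, completing the reduction.
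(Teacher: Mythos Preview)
Your upper bound and overall reduction plan (reduce $\COF$, keep $P^\infty(H_n)=\{p_m:m\in K\}$ for every $n$, and make $P^{\mathrm{fin}}(H_n)$ finite iff $n\in\COF$) match the paper. The gap is in how you try to realize (b).

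Your ``key observation'' is not correct as stated. When $m_x^{(s)}$ increases, the element that entered $K$ may be some $m<m_x^{(s-1)}$; in that case the discarded value $m_x^{(s-1)}$ itself is \emph{not} absorbed into $P^\infty$ --- it simply becomes $m_{x-1}^{(s)}$. Concretely, an addition made on behalf of $x$ at an early stage can deposit $1/p_{m_y^*}$ for some $y<x$, so contributions cross between indices. More seriously, the irreversible nature of the construction defeats any delay mechanism: once $1/p_m$ with $m\notin K$ is enumerated into $H_n$, $p_m$ is in $P^{\mathrm{fin}}(H_n)$ permanently. For a cofinite $W_n$, nothing prevents $m_x^{(s)}$ from stabilizing to $m_x^*\notin K$ long before $x$ enters $W_n$, for infinitely many $x$; any computable stability threshold can be outwaited. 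There is no ``injury'' to absorb here, because a bad commitment cannot be undone by a later action, so the appeal to a finite-injury argument does not close the gap.

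The missing idea is to exploit the $m$-completeness of $K$ rather than the $\Pi_1$ set $\omega\setminus K$. Uniformly in $s$ one can compute an index $k_s$ with $k_s\in K\iff s\in W_n$ (this is just the $m$-reduction of $W_n$ to $K$). The paper then enumerates $1/p_{k_s}$ once for each $s$, together with all $1/p_i^k$ for $i\in K$. Now the correction is automatic: if $s\in W_n$ then $k_s\in K$ and $p_{k_s}$ is swallowed by $P^\infty$; if $s\notin W_n$ then $k_s\notin K$ and $p_{k_s}$ stays in $P^{\mathrm{fin}}$. Hence $P^{\mathrm{fin}}(H_n)=\{p_{k_s}:s\notin W_n\}$, which is finite exactly when $W_n$ is cofinite. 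Replacing your second step by this device makes the argument go through without any priority machinery.
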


\begin{proof}
Fix $n$. We construct $G_n$ so that $G_n\cong G$ if and only if $n\in\COF$.

For every $s$, we can recursively find the index $k_s = e$ of a program such that
$$\phi_e(e) =
\begin{cases}
\downarrow, & \text{if }\phi_n(s)\downarrow \\
\uparrow, & \text{if }\phi_n(s)\uparrow .
\end{cases}
$$

Now we construct $G_n$ by making $p_{k_s}|1$ for every $k_s$. We also make $p_i$ divide every element if we see $i\in K$.

Now we verify $G_n \cong G$ if and only if $n\in \COF$. We first observe that $P^\infty(G_n) = \{p_i \mid i\in K\}$. It's also clear from construction that $P^\text{fin}(G_n) = \{ p_{k_s} \mid k_s \notin K\}$. But $k_s \in K$ if and only if $\phi_n(s)\downarrow$, thus $P^\text{fin}(G) = \{ p_{k_s} \mid \phi_n(s)\uparrow \}$.

Now, $P^\infty(G_n) = \{p_i \mid i\in K\} = P^\infty(G)$. Thus $G_n \cong G$ iff $P^\text{fin}(G_n) =^* P^\text{fin}(G) = \emptyset$ iff $P^\text{fin}(G_n) = \{ p_{k_s} \mid \phi_n(s)\uparrow \}$ is finite iff $n \in \COF$.
\end{proof}

\begin{remark}
Note that this argument works for any $X \equiv_m K$. It is natural to then ask that whether we can find a Turing-degree based characterization of when the index set will be $m$-complete $\Sigma_3$. In the next section, we will show this cannot be found.
\end{remark}

\section{Complexity hierarchy of pseudo-Scott sentences}

In this section, we continue looking at subgroups of $\mathbb{Q}$ as above. We first give the definition of a pseudo-Scott sentence, which, just like a Scott sentence, identifies a structure, but only among the computable structures. Note that every computable Scott sentence is a pseudo-Scott sentence.

\begin{definition}
A \emph{pseudo-Scott sentence} for a structure $\mathcal{A}$ is a sentence in $\mathcal{L}_{\omega_1,\omega}$ whose computable models are exactly the computable isomorphic copies of $\mathcal{A}$.
\end{definition}

Similar to the case of computable Scott sentences, a pseudo-Scott sentence of a structure yields a bound on the complexity of the index set of the structure.

We shall give an example of a group which has a computable $\Sigma_3$ pseudo-Scott sentence and a computable $\Pi_3$ pseudo-Scott sentence, but no computable d-$\Sigma_2$ pseudo-Scott sentence. This is related to a question about the effective Borel hierarchy in $\operatorname{Mod}(\mathcal{L})$.

Consider the complexity hierarchy of (computable pseudo-)Scott sentences. Since $\alpha \smallsetminus (\beta \smallsetminus \gamma) = (\alpha\wedge\neg\beta) \vee (\alpha \wedge \gamma)$, we see that the hierarchy collapses in the sense that the complexity classes $k$-$\Sigma_n$ are all the same for $k\geq2$.

One open question is whether the complexity classes $\Delta_{n+1}$ and d-$\Sigma_n$ are the same or not for regular, computable, and pseudo-Scott sentences. The complexity hierarchy of Scott sentences (computable Scott sentences, respectively) is related to the boldface (effective, respectively) Borel hierarchy on the space $\operatorname{Mod}(\mathcal{L})$, see \cite{Va75} and \cite{Va07}. In \cite{Mi78}, it was shown that $\boldsymbol{\Delta}_{n+1}$ and d-$\boldsymbol{\Sigma}_n$ are the same in the boldface case, i.e.~if a structure has a $\Sigma_{n+1}$ Scott sentence and a $\Pi_{n+1}$ Scott sentence, then it also has a d-$\Sigma_n$ Scott sentence. This gives a positive answer to the question for Scott sentences. However, we will prove that this is not true in the complexity hierarchy of computable pseudo-Scott sentences by giving a subgroup $G\subset\mathbb{Q}$ which has a computable $\Sigma_3$ pseudo-Scott sentence and a computable $\Pi_3$ pseudo-Scott sentence, but no computable d-$\Sigma_2$ pseudo-Scott sentence. This gives a negative answer to the question for pseudo-Scott sentences. The question of whether $\Delta_{n+1}$ and d-$\Sigma_n$ are the same in the effective case (the complexity hierarchy of computable Scott sentence) remains open.

We start by strengthening a result in \cite{Kn14}. The first part of the proof where we construct the theory $T$ is unchanged.

\begin{lemma}\label{lower bound lemma}
Fix a non-computable c.e.~set $X$, and let $G\subset \mathbb{Q}$ be a subgroup such that $1\in G$, $P^\infty(G) = \{p_i\mid i\in X\}$, and $P^\text{fin}(G)=\emptyset$. Then $G$ does not have a computable d-$\Sigma_2$ pseudo-Scott sentence.
\end{lemma}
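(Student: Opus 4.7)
The plan is to argue by contradiction, following the first part of the proof in \cite{Kn14} but strengthening the conclusion by producing a \emph{computable} counterexample model. Suppose $G$ has a computable d-$\Sigma_2$ pseudo-Scott sentence $\psi = \sigma \wedge \tau$ with $\sigma$ computable $\Sigma_2$ and $\tau$ computable $\Pi_2$. Since $G \models \psi$, fix a disjunct $\exists \bar{x}\, \phi(\bar{x})$ of $\sigma$ (with $\phi$ a computable $\Pi_1$ formula) witnessed by a tuple $\bar{a} \in G$. First I would set up the theory $T$ exactly as in \cite{Kn14}: it encodes both the $\Pi_1$-diagram of $(G,\bar{a})$ coming from $\phi$ and the $\Pi_2$ axioms coming from $\tau$, so that every model of $T$ automatically satisfies $\psi$.

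The new work is to produce a computable $H \subset \mathbb{Q}$ modeling $T$ with $H \not\cong G$. I would do so by a finite-injury construction building $H$ stage-by-stage as a computable subgroup of $\mathbb{Q}$ containing $1$, where at each stage we commit to finitely many divisibility facts $p^k \mid q$ among elements enumerated so far. Since $X$ is c.e., we can effectively enumerate $P^\infty(G) = \{p_i : i \in X\}$ and mimic it in $H$. To diagonalize we exploit the non-computability of $X$: designate a target prime $p^*$ (chosen dynamically along the construction) and use a permitting-style strategy to force $P^\infty(H) \ne^* P^\infty(G)$, for instance by making $p^* \in P^\infty(H)$ precisely when the approximation to $X$ never enumerates the index of $p^*$, and vice versa. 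Because $P^\text{fin}(G) = \emptyset$ by hypothesis and subgroups of $\mathbb{Q}$ are classified up to isomorphism by $(P^\text{fin}, P^\infty)$ modulo finite differences, any such deviation guarantees $H \not\cong G$.

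The main obstacle I expect is verifying that the diagonalized $H$ still models $\tau$. Each $\Pi_2$ conjunct $\forall \bar{y}\, \exists \bar{z}\, \psi_j(\bar{y},\bar{z})$ must hold in $H$, and a priori a single change of prime-status in $P^\infty$ could falsify one of these global statements. I would handle this via the structure of $T$ from \cite{Kn14}: the theory is arranged so that any subgroup of $\mathbb{Q}$ whose $\Pi_1$-diagram on $(G,\bar{a})$ matches and whose $P^\infty$ deviates by a controlled modification on a single prime still satisfies every $\Pi_2$ conjunct of $\tau$. Concretely, in the construction each $\Pi_2$ requirement is placed at a fixed priority, and at each stage I search for witnesses $\bar{z}$ for every tuple $\bar{y}$ enumerated by that stage, exploiting that such witnesses exist locally in $G$ (since $G \models \tau$) and can be copied into $H$ without interacting with the divisibility of $p^*$.

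Putting the pieces together, $H$ is a computable model of $\psi$ with $H \not\cong G$, contradicting the assumption that $\psi$ is a pseudo-Scott sentence of $G$. The key technical heart of the argument, which is where I would expect most of the effort to go, is orchestrating the priorities so that the diagonalization on $p^*$ and the witness-providing strategies for the conjuncts of $\tau$ do not injure each other infinitely often.
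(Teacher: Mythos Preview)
Your approach has a genuine gap and also misses the much simpler route the paper takes. The permitting-style diagonalization you describe is not coherent: making $p^* \in P^\infty(H)$ ``precisely when the approximation to $X$ never enumerates the index of $p^*$'' is a $\Pi_1$ condition on a c.e.\ set and cannot be decided computably, so this does not yield a computable $H$. More fundamentally, you are trying to build a single computable counterexample model by a priority construction, but there is no guarantee such a construction succeeds while simultaneously respecting all the $\Pi_2$ conjuncts of $\tau$; your claim that witnesses ``can be copied into $H$ without interacting with the divisibility of $p^*$'' is exactly the point that needs proof, and you have not supplied it.

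The paper avoids all of this with a dichotomy argument. After setting up $T$ (as in \cite{Kn14}, with the $\Pi_2$ part replaced via $\alpha\vdash\phi$), one proves the Claim: for every $i\notin X$ there is $k$ with $T\vdash p_i^k\nmid c$. The proof of the Claim is where the computable counterexample enters, and it is trivial to build: if the Claim fails at some $n\notin X$, take $K\subset\mathbb{Q}$ with $P^\infty(K)=X\cup\{p_n\}$ and $P^{\mathrm{fin}}(K)=\emptyset$. This $K$ is computable simply because $X$ is c.e.\ and we add one prime; no injury, no permitting. Since $K$ embeds as a substructure of any model of $T\cup\{p_n^k\mid c\}_k$, the $\Pi_1$ conjuncts $\rho_i(c)$ transfer down, so $K$ satisfies the pseudo-Scott sentence---contradiction. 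Once the Claim holds, searching for proofs in the computable theory $T$ enumerates the complement of $X$, contradicting its noncomputability. So the real engine is \emph{syntactic} (provability in $T$), not a model construction; the only ``model'' needed is the one-line $K$ above.
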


\begin{proof}
Suppose $G$ has a computable d-$\Sigma_2$ pseudo-Scott sentence $\phi \wedge \psi$, where $\phi$ is computable $\Pi_2$ and $\psi$ is computable $\Sigma_2$. Let $\alpha$ be a computable $\Pi_2$ sentence characterizing the torsion-free abelian groups $A$ of rank 1 such that $P^\infty(A)\subseteq X$. By \cite[Lemma 2.3]{Kn14}, $\alpha\vdash\phi$, thus we can replace $\phi$ by $\alpha$ in the pseudo-Scott sentence.

Also, again by \cite{Kn14}, we can assume $\psi$ has the form $\exists x\  \chi(x)$ where $x$ is a singleton and $\chi(x)$ is a c.e.~conjunction of finitary $\Pi_1$ formulas.

Now we consider the first-order theory $T$ in $\mathcal{L}$ with an extra constant symbol $c$ to be union of the following sentences:
\begin{enumerate}
\item axioms of torsion-free abelian groups,
\item $\forall x \exists y \ p y=x$ for each $p\in X$,
\item $\rho_i(c)$ for every finitary $\Pi_1$ conjunct $\rho_i(x)$ of the $\Pi_1$ sentence $\chi(x)$.
\end{enumerate}

Now we show that \cite[Lemma 2.4]{Kn14} is still true:

\begin{claim}For every $i\notin X$, there is some $k$ such that $T\vdash p^k_i \nmid c$. \end{claim}

\begin{proof}[Proof of claim]
Suppose this is not true. There is $n\notin X$ such that $T\cup\{p^k_n|c\}_{k=1}^\infty$ is consistent. Take a model $H\models T\cup\{p^k_n | c\}_{k=1}^\infty$, and let $C\subset H$ be the subgroup consisting of rational multiples of $c$. 

Let $K\subset\mathbb{Q}$ be the computable group with $1\in K$, $P^\infty(K) = \{p_i\mid i\in X\}\cup\{p_n\}$, and $P^\text{fin}(K)=\emptyset$. Then $K$ is isomorphic to a subgroup of $C$. By interpreting the constant symbol $c$ in $K$ as the preimage of $c\in C$, $K$ is a substructure of $H$. Thus all the finitary $\Pi_1$ statements $\rho_i(c)$ are also true in $K$. 

So, $K$ is a torsion-free rank 1 abelian group satisfying $T$, thus $K\models\phi\wedge\psi$. Since $K$ is computable, it is isomorphic to $G$, but $P^\infty(G) \neq P^\infty(K)$, a contradiction.
\end{proof}

Now we have a computable theory $T$ such that for every $i\notin X$, there is some $k$ such that $T\vdash p^k_i \nmid c$. Therefore, the complement of $X$ is c.e., and this contradicts the assumption that $X$ is non-computable c.e.
\end{proof}

We also need the following lemma:

\begin{lemma}
There exists a c.e.~set $X \subseteq \omega$ with $X \equiv_T 0'$ satisfying the following:\\
There is a uniformly c.e.~sequence $S_n$ so that if $W_n\supset X$, $W_n\neq^*X$, then $S_n$ is an infinite c.e.~subset of $W_n\smallsetminus X$.
\end{lemma}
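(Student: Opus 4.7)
The plan is a finite-injury priority construction building $X$ and the uniformly c.e.\ sequence $(S_n)$ in tandem. First, fix a computable partition $\omega = M \sqcup A$ into an infinite ``marker region'' and an infinite ``arena''. For each $e$, assign a computable marker $m_e \in M$ and have the coding requirement $P_e$ enumerate $m_e$ into $X$ exactly when $e$ enters $0'$. This one-query reduction $e \in 0'$ iff $m_e \in X$ gives $X \equiv_T 0'$ at once, independently of the rest of the construction.

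For each $n$, the requirement $N_n$ builds $S_n$ by a reservation strategy on the arena: at each stage $s$, scan $W_{n,s}$ for the least $x \in A$ with $x \notin X_s$ and $x$ not yet reserved; on success enumerate $x$ into $S_n$ and reserve $x$, committing never to enumerate $x$ into $X$. Because reservations stay in $A$ and markers stay in $M$, the coding is never injured and $S_n \subseteq W_n \setminus X$ is automatic. The sequence is uniformly c.e.\ in $n$, and $S_n$ is infinite whenever $W_n \cap (A \setminus X)$ is infinite.

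The main obstacle, which I expect to take most of the argument, is the degenerate case where $W_n \supseteq X$ with $|W_n \setminus X| = \infty$ but $W_n \cap A$ is finite, so that the extras of $W_n \setminus X$ all sit in $\{m_e : e \notin 0'\} \subset M$ and the scan stalls. I would handle this by equipping each $N_n$ with a diagonalization step: at any stage where the scan has been idle, $N_n$ picks a fresh $z \in A$ and enumerates $z$ into $X$. If some such $z$ never enters $W_n$, then $z \in X \setminus W_n$ witnesses $W_n \not\supseteq X$, satisfying $N_n$ vacuously. Otherwise every bait $z$ is eventually absorbed into $W_n$, forcing $W_n \cap A$ to be infinite; with a priority setup keeping $X \cap A$ sparse inside $A$ (so $A \setminus X$ stays infinite and computable), the bait race is designed so that in this branch $W_n$ must also meet $A \setminus X$ infinitely, whereupon the reservation scan succeeds infinitely often and $S_n$ becomes infinite.

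The hardest step I anticipate is precisely this last claim --- that absorbing every bait forces $W_n$ to meet $A \setminus X$ at infinitely many stages --- which has to be engineered by careful rate-balancing between bait enumeration and reservation scanning, together with a sparseness condition on the cumulative baits. The overall construction uses the standard priority order $P_0, N_0, P_1, N_1, \ldots$ with pairwise disjoint infinite computable subarenas $A_n \subseteq A$ for each $N_n$; since baits and reservations live entirely in $A$, no $P_e$ is ever injured and $X \equiv_T 0'$ is preserved throughout the construction.
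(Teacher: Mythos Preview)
Your construction has a genuine gap at exactly the step you flagged as hardest, and in fact that step cannot be completed. Consider the c.e.\ set $W_n = X \cup M$. (Such an index $n$ exists, since $X$ is c.e.\ and $M$ is computable; you do not need the recursion theorem, only that the lemma must work for \emph{every} $n$.) For this $n$ we have $W_n \supseteq X$ and $W_n \setminus X = M \setminus X = \{m_e : e \notin 0'\}$, which is infinite, so the hypothesis of $N_n$ is met. But $W_n \cap A = X \cap A$, hence $W_n \cap (A \setminus X) = \emptyset$, so your reservation scan never finds a single element and $S_n$ stays empty. The bait race does nothing here: every bait $z$ you enumerate into $X$ is automatically in $W_n$ (since $W_n \supseteq X$), and it enters $X$ before or at the same stage it enters $W_n$, so it is never seen in $W_{n,s} \setminus X_s$. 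No amount of rate-balancing or sparseness changes this, because the problem is set-theoretic, not dynamic: $W_n$ simply has no elements in $A \setminus X$.

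The underlying issue is the rigid partition $\omega = M \sqcup A$. By walling off the coding region $M$ from the reservation region $A$, you make it impossible to reserve anything when $W_n \setminus X$ happens to live entirely inside $M$. The paper's construction avoids this by \emph{not} separating the two roles: it codes $0'$ into growing intervals $I_i$ with $|I_i| = i+1$, and lets each $Q_k$ reserve elements inside those very intervals. The growth guarantees that the at most $i$ reservations coming from $Q_0,\ldots,Q_{i-1}$ still leave $R_i$ a free slot for coding. Because the $I_i$ partition all of $\omega$, any infinite $W_k \setminus X$ must meet infinitely many of them, and $Q_k$ can reserve one element in each. The fix to your approach would be to allow $N_n$ to reserve markers, but then you need redundancy in the markers so that $P_e$ can still code---and that is precisely the increasing-interval idea.
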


\begin{proof}
Write $I_i =  [\frac{i(i+1)}{2},\frac{(i+1)(i+2)}{2})$. Note that $|I_i|=i+1$. Consider the following requirements:

\begin{itemize}
\item $R_i$: $X\cap I_i \neq \emptyset$ if and only if $i\in 0'$
\item $Q_k$: build an infinite c.e.~$S_k\subset W_k\smallsetminus X$ if $W_k\supset X$ and $W_k\neq^* X$
\end{itemize}

If at some stage $R_i$ sees $i\in 0'$, then it puts some element of $I_i$ that is not yet blocked by higher priority $Q_k$'s into $X$.

$Q_k$ will attempt to put $n$ into $S_k$ whenever $n \in W_k\smallsetminus X$ at stage $s$. Suppose $n\in I_i$. If $i<k$, then $Q_k$ does nothing. If $i>k$, then $Q_k$ puts $n$ into $S_k$, and blocks $R_i$ from enumerating $n$ into $X$, but $Q_k$ will also block itself from enumerating other elements of $I_i$ into $S_k$.

Note that for each $R_i$, at most $i$ elements of $I_i$ will be blocked, because for each $I_i$, every higher priority $Q_k$ will block at most one element. Thus $R_i$ can always satisfy the requirement.

Also, if $W_k\supset X$ and $W_k\neq^* X$, then $Q_k$ will eventually enumerate infinitely many numbers into $S_k$, since after enumerating finitely many of them, there are only finitely many things blocked by $Q_k$ itself and finitely many higher priority $R_i$'s. Lastly, $S_k$ will be disjoint from $X$ because whenever $n$ is enumerated into $S_k$, the $R_i$'s will be blocked from enumerating it into $X$.
\end{proof}

\begin{theorem}\label{counterexample}
There exists a group with both computable $\Sigma_3$ and computable $\Pi_3$ pseudo-Scott sentences (i.e.~$\Delta_3$), but no computable d-$\Sigma_2$ pseudo-Scott sentence.
\end{theorem}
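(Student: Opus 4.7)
The plan is to take the c.e.\ set $X$ constructed in the preceding lemma (c.e.\ with $X \equiv_T 0'$ and the stated $S_n$-property) and let $G \subseteq \mathbb{Q}$ be the subgroup containing $1$ with $P^\infty(G) = \{p_i : i \in X\}$ and $P^\text{fin}(G) = \emptyset$. Because $X$ is a non-computable c.e.\ set, Lemma~\ref{lower bound lemma} immediately gives that $G$ has no computable d-$\Sigma_2$ pseudo-Scott sentence, handling the lower-bound half of the theorem.

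For the $\Sigma_3$ pseudo-Scott sentence, $G$ falls into case~5 of Knight--Saraph's analysis of subgroups of $\mathbb{Q}$ (with $|P^0|, |P^\infty|$ both infinite and $P^\text{fin} = \emptyset$); the corresponding $\Sigma_3$ Scott sentence, essentially $\exists x\bigl(x \neq 0 \wedge \bigwedge_{i \in X} p_i^\infty \mid x \wedge \forall y \bigvee_{k, d}(dy = kx)\bigr)$ with $d$ ranging over the c.e.\ set of products of primes $p_i$ with $i \in X$ and $k$ over $\mathbb{Z}$, is what I would use.

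The main new construction is a computable $\Pi_3$ pseudo-Scott sentence $\phi$, formed as the conjunction of (i)~the $\Pi_1$ axioms for torsion-free abelian groups of rank $1$; (ii)~the $\Pi_2$ sentence $\bigwedge_{i \in X} \forall x \exists y(p_i y = x)$ asserting $p_i$-divisibility for each $i \in X$; and (iii)~for each $n$ in the c.e.\ set $\{n : S_n \neq \emptyset\}$, the conjunct $\forall x\bigl(x \neq 0 \to \bigvee_{j \in S_n} \neg(p_j \mid x)\bigr)$. Each conjunct in (iii) is of the form $\forall \Sigma_2$, i.e., $\Pi_3$, and the resulting c.e.\ conjunction keeps us inside $\Pi_3$. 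To see $G \models \phi$, note that any non-zero $x = k/d \in G$ has $d$ a product of primes in $\{p_i : i \in X\}$, so the only $p_j$ with $j \in \omega \setminus X$ that divide $x$ are the prime factors of the integer $k$, a finite set, and any non-empty $S_n \subseteq \omega \setminus X$ then supplies a witness $j \in S_n$ with $p_j \nmid x$.

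For the converse, if $H$ is computable with $H \models \phi$ but $H \not\cong G$, then clause~(ii) forces $X \subseteq D(H) := \{p : p \mid 1_H\}$, so $D(H) \supsetneq X$; choose $j_0 \in D(H) \setminus X$. Taking $n_0$ to be a c.e.\ index for $D(H)$ itself, one has $S_{n_0} \subseteq W_{n_0} \setminus X = D(H) \setminus X \subseteq D(H)$, and as soon as $S_{n_0}$ is non-empty the $n_0$-th conjunct in (iii) fails at $x = 1_H$, since every $j \in S_{n_0}$ then satisfies $p_j \mid 1_H$. The hard part is guaranteeing such an $n_0$ with $S_{n_0} \neq \emptyset$ exists in \emph{every} case: this is automatic when $D(H) \setminus X$ is infinite (the second lemma forces $S_{n_0}$ infinite), but when $D(H) \setminus X$ is finite the second lemma as stated does not immediately help. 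I would resolve this by strengthening the second lemma so that $\bigcup_n S_n = \omega \setminus X$, adding a family of requirements $Q'_j$ that reserves, for each $j$, a dedicated index $n(j)$ into which $j$ is enumerated into $S_{n(j)}$ whenever $j$ never enters $X$; since $|I_i| = i+1$ gives one more element than the number of $Q'_j$-requirements competing to block $R_i$ on $I_i$, the priority argument still leaves $R_i$ an element to enumerate into $X$ when $i \in 0'$.
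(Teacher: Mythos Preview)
Your overall strategy---take $G$ with $P^\infty(G)=X$ for the $X$ of the preceding lemma, quote Lemma~\ref{lower bound lemma} for the lower bound, and build a $\Pi_3$ sentence from the $S_n$'s---matches the paper's. The gap is in the precise form of clause~(iii).

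Your universal conjunct $\forall x\bigl(x\neq 0 \to \bigvee_{j\in S_n} p_j\nmid x\bigr)$ need not be satisfied by $G$. The lemma only controls $S_n$ when $W_n\supset X$ and $W_n\neq^* X$; for other $n$ the construction may leave $S_n$ a non-empty \emph{finite} subset of $\omega\setminus X$, say $S_n=\{j_1,\dots,j_m\}$. Then $x=p_{j_1}\cdots p_{j_m}$ is a non-zero integer in $G$ with $p_{j_\ell}\mid x$ for every $\ell$, so the disjunction fails at this $x$. Your verification that $G\models\phi$ tacitly assumes every non-empty $S_n$ is infinite, which is not guaranteed. Worse, your proposed strengthening of the lemma---introducing dedicated indices $n(j)$ with $j\in S_{n(j)}$---manufactures exactly such finite $S_n$'s and forces $G$ to fail the corresponding conjuncts (take $x=p_j$).

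The paper avoids this by dualising clause~(iii) to the existential form
\[
\exists g\ \bigwedge_{j\in S_n} p_j\nmid g,
\]
which is $\Sigma_2$, so the outer c.e.\ conjunction is still $\Pi_3$. Now $g=1$ witnesses \emph{every} conjunct in $G$, since $p_j\nmid 1$ for all $j\notin X$ and each $S_n\subseteq\omega\setminus X$. For the converse this form is also stronger than yours: if $j\in P^\infty(H)$ and $j\in S_n$, the conjunct fails outright (no $g\in H$ avoids $p_j$), whereas your $\forall\bigvee$ form could still hold via some other $i\in S_n$. The paper then takes $n$ with $W_n=D(H)$, and when $D(H)\neq^* X$ the lemma makes $S_n$ infinite inside $D(H)$; since any $g\in H$ is divisible by all but finitely many primes in $D(H)$, no $g$ can avoid all of $S_n$.

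Your instinct that the case ``$D(H)\setminus X$ finite non-empty'' needs attention is right, but note that if $P^\infty(H)=X$ and $D(H)\setminus X$ is finite then in fact $H\cong G$ (the types differ at finitely many primes by finite amounts), so the only genuinely problematic sub-case is $P^\infty(H)\supsetneq X$ with finite difference. Covering $\omega\setminus X$ by $\bigcup_n S_n$ does handle that---but only once clause~(iii) is in the $\exists\bigwedge$ form, where the singleton conjunct $\exists g\,(p_j\nmid g)$ both holds in $G$ (via $g=1$) and kills any $H$ with $j\in P^\infty(H)$.
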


\begin{proof}
Choose $X$ as in the previous lemma. Consider the subgroup $G\subset\mathbb{Q}$ with $P^\text{fin}(G) = \emptyset$ and $P^\infty(G)=X$. By \cite{Kn14}, $G$ has a computable $\Sigma_3$ (pseudo-)Scott sentence. By Lemma \ref{lower bound lemma}, $G$ does not have a computable d-$\Sigma_2$ pseudo-Scott sentence.

Let $\phi$ be the conjunction of $\bigwedge\limits_{S_k} \exists g \bigwedge\limits_{p\in S_k}p \nmid g$, ``$G$ is a subgroup of $Q$'', and ``$P^\infty(G) \supseteq X$''.

Say $H\models \phi$ be a computable group. Then $P^\infty(H)\cup P^\text{fin}(H)$ is c.e., so let $n$ be such that $W_n = P^\infty(H)\cup P^\text{fin}(H)$. Note that $W_n\supset X$. If $H \ncong G$, then $W_n\neq^* X$. Now consider $S_n \subset Y\smallsetminus X$, and the corresponding conjunct in $\phi$ which says $\exists g \bigwedge\limits_{p\in S_Y}p \nmid g$. But every element in $H$ is divisible by all but finitely many elements from $W_n$, and $S_n$ is an infinite subset of $W_n$, so $H$ cannot model this existential sentence, a contradiction. Thus $\phi$ is a computable $\Pi_3$ pseudo-Scott sentence of $G_X$.
\end{proof}

\begin{remark}
Note that in the first countable conjunction, the set of indices of $S_Y$ for $Y\supset X$ and $Y \neq^* X$ is not c.e. However, the set of indices of $S_Y$ for all $Y\subset \omega$ is c.e., even computable, by construction, and $G_X$ still models $\phi$ for this bigger conjunction.
\end{remark}

Note that the two groups in Proposition \ref{halting set} and Theorem \ref{counterexample} both have $P^{\text{fin}}=\emptyset$ and $P^\infty \equiv_T 0'$. However, one of them has index set being $m$-complete $\Sigma_3$, while the other has index set being $\Delta_3$. This tells us that we cannot hope to give a Turing-degree based characterization of which combinations of $P^0$, $P^\text{fin}$, and $P^\infty$ give $m$-complete $\Sigma_3$ index sets and which do not.

\subsection*{Acknowledgments} 
The author had a lot of useful input and stimulating conversations during the course of the work, including with Julia Knight, Alexei Myasnikov, Arnold Miller, and Steffen Lempp. Special thanks to Uri Andrews and to Tullia Dymarz for being such supportive and fantastic advisors.

\bibliographystyle{amsalpha}
\bibliography{ref}

\end{document}